\newtheorem{theorem}{Theorem}[section]
\newtheorem{corollary}[theorem]{Corollary}
\newtheorem{lemma}[theorem]{Lemma}
\newtheorem{question}[theorem]{Question}
\newtheorem*{definition*}{Definition}
\newcommand{\Comment}[1]{}
\theoremstyle{remark}
\begin{document}
\title{A point-sphere incidence bound in odd dimensions and applications}
\author{Doowon Koh\thanks{Department of Mathematiacs, Chungbuk National University, Korea. Email: koh131@chungbuk.ac.kr}\and Thang Pham\thanks{Department of Mathematics, HUS, Vietnam National University. Email: phamanhthang.vnu@gmail.com}}
\date{}
\maketitle

\begin{abstract}
In this paper, we prove a new point-sphere incidence bound in vector spaces over finite fields. More precisely, let $P$ be a set of points and $S$ be a set of spheres in $\mathbb{F}_q^d$. Suppose that $|P|, |S|\le N$, we prove that the number of incidences between $P$ and $S$ satisfies
\[I(P, S)\le N^2q^{-1}+q^{\frac{d-1}{2}}N,\]
under some conditions on $d, q$, and radii. This improves the known upper bound $N^2q^{-1}+q^{\frac{d}{2}}N$ in the literature. As an application, we show that for $A\subset \mathbb{F}_q$ with $q^{1/2}\ll |A|\ll q^{\frac{d^2+1}{2d^2}}$, one has
\[\max \left\lbrace |A+A|,~ |dA^2|\right\rbrace \gg \frac{|A|^d}{q^{\frac{d-1}{2}}}.\]
This improves earlier results on this sum-product type problem over arbitrary finite fields.
\end{abstract}

\section{Introduction}
Let $\mathbb{F}_q$ be a finite field of order $q$, where $q$ is a prime power. A sphere centered at $(c_1, \ldots, c_d)\in \mathbb{F}_q^d$ of radius $r$ is defined by the equation 
\[(x_1-c_1)^2+\cdots+(x_d-c_d)^2=r.\]
Let $P$ be a set of points in $\mathbb{F}_q^d$ and $S$ be a set of spheres with arbitrary radii in $\mathbb{F}_q^d$.
Let $I(P, S)$ be the number of incidences between $P$ and $S$, namely, 
\[I(P, S)=\#\left\lbrace (p, s)\colon p\in s, p\in P, s\in S\right\rbrace.\] 
The following point-sphere incidence bound was obtained  by Cilleruelo, Iosevich, Lund, Roche-Newton, and Rudnev  \cite{CILRR17}, and independently by Phuong, Pham, and Vinh \cite{PPV}.
\begin{theorem}\label{motbig}
Let $P$ be a set of points in $\mathbb{F}_q^d$ and $S$ be a set of spheres with arbitrary radii in $\mathbb{F}_q^d$. We have
\begin{equation}\label{square*}\left\vert I(P, S)-\frac{|P||S|}{q}\right\vert\le q^{\frac{d}{2}}(|P||S|)^{1/2}.\end{equation}
\end{theorem}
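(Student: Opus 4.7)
The plan is a standard Fourier-analytic extraction of the main term, followed by two applications of Cauchy--Schwarz. Fix a nontrivial additive character $\chi$ of $\mathbb{F}_q$, write each sphere $s \in S$ as $(c_s, r_s)$ (center and radius), and use the orthogonality relation
\[\mathbf{1}[\|p-c_s\|^2 = r_s] = \frac{1}{q}\sum_{t \in \mathbb{F}_q}\chi\bigl(t(\|p-c_s\|^2 - r_s)\bigr)\]
to decompose
\[I(P,S) = \frac{|P||S|}{q} + \frac{1}{q}\sum_{t \neq 0}\sum_{p \in P,\, s \in S}\chi(t\|p-c_s\|^2)\,\chi(-tr_s).\]
The $t=0$ contribution is the claimed main term, and the task is to bound the remaining error $E$ by $q^{d/2}(|P||S|)^{1/2}$.

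For a fixed $t\neq 0$, I expand $\|p-c_s\|^2 = \|p\|^2 - 2\,p\cdot c_s + \|c_s\|^2$ and set $\alpha_t(p) := \mathbf{1}_P(p)\chi(t\|p\|^2)$ and $\beta_t(c) := \chi(t\|c\|^2)\sum_{r}\mathbf{1}_S(c,r)\chi(-tr)$. The inner double sum then rewrites as
\[\sum_{p,c}\alpha_t(p)\,\beta_t(c)\,\chi(-2t\,p\cdot c) = \sum_c \beta_t(c)\,\widehat{\alpha_t}(2tc),\]
where $\widehat{\alpha_t}(\xi) = \sum_p \alpha_t(p)\chi(-p\cdot \xi)$. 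A Cauchy--Schwarz inequality in $c$, combined with Plancherel and the bijectivity of $c\mapsto 2tc$ (which uses $\mathrm{char}(\mathbb{F}_q)\neq 2$), bounds this by
\[\|\beta_t\|_2\bigl(q^d \|\alpha_t\|_2^2\bigr)^{1/2} = q^{d/2}|P|^{1/2}\,\|\beta_t\|_2.\]

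To finish, I sum over $t\neq 0$ and apply Cauchy--Schwarz in $t$, which reduces the problem to controlling $\sum_t \|\beta_t\|_2^2$. Orthogonality of characters in the radius variable gives
\[\sum_{t\in\mathbb{F}_q}\|\beta_t\|_2^2 = \sum_c \sum_t \Bigl|\sum_r \mathbf{1}_S(c,r)\chi(-tr)\Bigr|^2 = q\sum_{c,r}\mathbf{1}_S(c,r) = q|S|,\]
so $\sum_{t\neq 0}\|\beta_t\|_2 \le (q-1)^{1/2}(q|S|)^{1/2} \le q|S|^{1/2}$. Assembling the three estimates,
\[|E| \le \frac{1}{q}\cdot q^{d/2}|P|^{1/2}\cdot q|S|^{1/2} = q^{d/2}(|P||S|)^{1/2},\]
which is exactly the required bound.

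The argument is mostly bookkeeping. The only real obstacle is keeping the normalization of the Fourier transform consistent, so that the $q^d$ from Plancherel and the $q$ from orthogonality in $t$ combine with the $1/q$ prefactor to produce exponent $d/2$ rather than $d$ or $(d-1)/2$. Notice that the proof uses no lower bound on the radii and no parity assumption on $d$, which is consistent with $q^{d/2}$ being the generic estimate that the remainder of the paper aims to improve in the special case of odd dimension with suitable radii.
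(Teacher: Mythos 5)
Your proof is correct, and every step checks out: the orthogonality decomposition cleanly isolates the main term $|P||S|/q$; the factorization $\chi(t\|p-c\|^2)=\alpha_t(p)\beta_t(c)\chi(-2t\,p\cdot c)$ is valid and the map $c\mapsto 2tc$ is indeed a bijection in odd characteristic; Plancherel in $p$ gives $q^{d/2}|P|^{1/2}\|\beta_t\|_2$ for each $t\ne 0$; and the final Cauchy--Schwarz in $t$ together with $\sum_t\|\beta_t\|_2^2=q|S|$ yields the bound with constant exactly $1$. Be aware, though, that the paper contains no proof of Theorem \ref{motbig} to compare against --- it is quoted from \cite{CILRR17} and \cite{PPV} --- so your argument should be judged against those sources. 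Your route is the standard exponential-sum proof, closest in spirit to \cite{PPV} and to the Fourier-analytic setup of Section 2 of this paper, whereas \cite{CILRR17} obtain the same bound by elementary means, lifting spheres to hyperplanes and invoking a point--hyperplane incidence theorem proved by counting arguments. One virtue of writing the proof your way is that it makes visible exactly where the exponent $d/2$ enters: you use only the generic $L^2$ (Plancherel) bound on $\widehat{\alpha_t}$ and extract no cancellation from the sum over $t$; the improvements to $q^{(d-1)/2}$ in Theorems \ref{new-incidence} and \ref{new-incidence11} come precisely from replacing this generic step by the exact eigenvalue computations for the cone and zero-radius-sphere Cayley graphs in Theorems \ref{eigenvalue} and \ref{eigenvalue198}.
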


It follows from this theorem that if $|P||S|>q^{d+2}$, then the set of point-sphere incidences between $P$ and $S$ is non-empty. Using this incidence bound, Cilleruelo et al. \cite{CILRR17} proved a Beck type theorem that for any set $P\subset \mathbb{F}_q^2$, if $|P|>5q$, then the number of distinct circles determined by points of $P$ is at least $\frac{4q^3}{9}$. We refer the reader to \cite{CILRR17, PPV} for other applications. 

Throughout this paper, we use the following notations: $X \ll Y$ means that there exists  some absolute constant $C_1>0$ such that $X \leq C_1Y$, and  $X\sim Y$ means $Y\ll X\ll Y$.

The main purpose of this paper is to improve the upper bound of Theorem \ref{motbig}. More precisely, we study the following question: 

\begin{question}\label{qs}
Let $P$ be a set of points and $S$ be a set of spheres in $\mathbb{F}_q^d$. Under what conditions on $d$, $q$, and the sets will we be able to improve the bound $\frac{|P||S|}{q}+q^{\frac{d}{2}}\sqrt{|P||S|}$?
\end{question}
We first start with some observations. 

{\bf Observation $1$:} if $d\equiv 3\mod 4$ and $q\equiv 1\mod 4$, or $d\equiv 1\mod 4$, then the term $q^{\frac{d}{2}}(|P||S|)^{1/2}$ in (\ref{square*}) cannot be improved to $q^{\frac{d}{2}-\epsilon}(|P||S|)^{1/2}$ for any $\epsilon>0$ for arbitrary sets $P$ and $S$. Otherwise, one could follow the proof of \cite[Corollary 1]{CILRR17} to show that for any $E\subset \mathbb{F}_q^d$ with $|E|\gg q^{\frac{d+1}{2}-\epsilon'}$, for some $\epsilon'>0$, we have the set of distances determined by pairs of points in $E$ satisfies $|\Delta(E)|\gg q$. This would contradict with a construction in \cite[Theorem 2.7]{hart} that states that the exponent $\frac{d+1}{2}$ for the distance problem is sharp in those dimensions even one wishes to cover a positive proportion of all distances. Note that in the proof of \cite[Corollary 1]{CILRR17}, the size of $S$ is much larger than the number of points in $P$. We refer the reader to \cite{CILRR17, hart} for more explanations.

{\bf Observation $2$:} if $d\equiv 2\mod 4$ and $q\equiv 1\mod 4$, or $d\equiv 0\mod 4$, then there exists a set $E\subset \mathbb{F}_q^d$ with $|E|=q^{\frac{d}{2}}$ such that $x\cdot y=x\cdot x=0$ for all $x, y\in E$, see \cite[Lemma 5.1]{hart}. Hence, we can set $P=E$ and $S$ being the set of spheres centered at points in $E$ of radius $0$. It is clear that $I(P, S)=|P||S|=q^{\frac{d}{2}}\sqrt{|P||S|}$. Thus, the upper bound of (\ref{square*}) is sharp for this case. 

{\bf Observation $3$:} if all spheres in $S$ have the same radius, then a stronger result follows directly from a theorem of Iosevich and Rudnev in \cite{iosevich-rudnev}:
\begin{equation}\left\vert I(P, S)-\frac{|P||S|}{q}\right\vert< 2q^{\frac{d-1}{2}}(|P||S|)^{1/2}.\end{equation}

In a recent paper \cite{KLP}, Koh, Pham, and Lee introduced an approach of using results from the restriction problem for cones to study this incidence topic. As a consequence, they obtained the following improvement.
\begin{theorem} \label{incidence-theorem}
Let $P$ be a set of points in $\mathbb{F}_q^d$ and $S$ be a set of spheres in $ \mathbb{F}_q^d.$ 
\begin{enumerate}
\item If $d\equiv 2 \mod{4}$, $q\equiv 3 \mod{4}$, and $|S|\le q^{\frac{d}{2}}$, then we have
$$\left|I(P, S)-q^{-1}|P||S| \right|\ll q^{\frac{d-1}{2}}|P|^{\frac{1}{2}}|S|^{\frac{1}{2}}.$$
\item If  $d$ is even and $q\equiv 1 \mod{4},$ or $d\equiv 0 \mod{4},$   then the same conclusion holds under the condition $|S|\le q^{\frac{d-2}{2}}$. 
\item If $d\ge 3$ is odd, then the same conclusion holds under the condition $|S|\le q^{\frac{d-1}{2}}$. 
\end{enumerate}
\end{theorem}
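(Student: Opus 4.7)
My plan is to follow the Fourier-restriction framework introduced in \cite{KLP}, reducing the improved point--sphere incidence bound to a cone restriction estimate in one higher dimension.

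I begin by expanding the sphere-indicator using orthogonality of additive characters. For a fixed non-trivial additive character $\chi$ of $\mathbb{F}_q$,
$$
I(P,S) \;=\; \frac{1}{q}\sum_{p\in P,\,(c,r)\in S}\sum_{t\in\mathbb{F}_q}\chi\!\bigl(t(|p-c|^2 - r)\bigr).
$$
The $t=0$ contribution is exactly the main term $|P||S|/q$. For $t\neq 0$, I expand $|p-c|^2 = |p|^2 - 2p\cdot c + |c|^2$ and lift to the paraboloid via $\tilde p = (p,|p|^2)\in\mathbb{F}_q^{d+1}$ and to the dual parameter $\tilde s = (-2c,\,|c|^2 - r)\in\mathbb{F}_q^{d+1}$. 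The resulting phase is then linear in $(\tilde p,\tilde s)$, and the $t$-sum collapses to a Gauss-sum kernel which, after a change of variable, can be identified with the Fourier extension operator associated to a cone in $\mathbb{F}_q^{d+1}$.

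The next step is to apply Plancherel and Cauchy--Schwarz to reduce the resulting bilinear form to an $L^2\!\to\!L^r$ estimate for this cone extension operator. At this point I would invoke the cone restriction bounds proved in \cite{KLP}, which produce a gain of a factor $q^{1/2}$ over the generic Fourier bound underlying Theorem~\ref{motbig}. Tracking where this saving is actually available in terms of $|S|$ should give the explicit thresholds $|S|\le q^{d/2},\ q^{(d-2)/2},\ q^{(d-1)/2}$ that appear in cases (1)--(3): these are precisely the sizes up to which the cone restriction estimate outperforms the trivial bound.

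The main technical obstacle, and the reason for the case split, is the arithmetic of the Gauss sums appearing as Fourier coefficients of the sphere indicator: they involve the quadratic character $\eta$ of $\mathbb{F}_q^*$ and both their magnitudes and their signs are governed by $d\bmod 4$ and $q\bmod 4$. For $d$ odd (case (3)) the relevant Gauss sums are purely real and the argument goes through uniformly in $q$; for $d$ even, whether the quadratic form $x\mapsto|x|^2$ is hyperbolic or elliptic determines which restriction estimate is available, which is what forces the separation between cases (1) and (2). Matching each case to the correct cone restriction estimate from \cite{KLP} and keeping careful track of the resulting numerical factors is, I expect, where most of the technical effort will lie.
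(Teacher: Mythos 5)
A preliminary but important point: this paper does not prove Theorem~\ref{incidence-theorem} at all. It is quoted verbatim from \cite{KLP} as background, so there is no internal proof to compare your attempt against; the only fair comparison is with the strategy that \cite{KLP} is described as using, namely converting point--sphere incidences into a restriction/extension estimate for a cone. Your sketch is aimed in that direction, so the overall orientation is reasonable.

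As a proof, however, the proposal has genuine gaps rather than being a complete argument. First, the lift you write down does not do what you claim: with $\tilde p=(p,|p|^2)\in\mathbb{F}_q^{d+1}$ and $\tilde s=(-2c,|c|^2-r)\in\mathbb{F}_q^{d+1}$, the phase $t\left(|p-c|^2-r\right)$ equals $t\left(\tilde p\cdot(-2c,1)+|c|^2-r\right)$, which is not linear in the pair $(\tilde p,\tilde s)$; to linearize $|p|^2-2p\cdot c+|c|^2-r=0$ as a genuine dot product one needs two extra coordinates, i.e.\ a lift into $\mathbb{F}_q^{d+2}$, and the relevant quadratic form is of the hyperbolic type $Q(x)=-x_1x_2+x_3^2+\cdots+x_k^2$ (see the remark following Lemma~\ref{FTForm}), not the paraboloid. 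Second, and more seriously, every quantitatively decisive step is deferred: you do not state which cone restriction estimate you would invoke, you do not carry out the Cauchy--Schwarz/Plancherel reduction, and the thresholds $|S|\le q^{d/2}$, $q^{(d-2)/2}$, $q^{(d-1)/2}$ --- which are the entire content of the theorem and differ case by case --- are asserted to ``come out'' of tracking the saving rather than derived. Likewise the case split is only described qualitatively; it is governed by the explicit signs of $\eta(-1)\mathcal{G}_1^{k}$ computed via Lemma~\ref{ExplicitGauss}, and one must actually compute these to see why $d\equiv 2 \bmod 4$ with $q\equiv 3\bmod 4$ behaves differently from the other even cases. As it stands the proposal is a plausible reading of the strategy of \cite{KLP}, not a proof of the statement.
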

In comparison, in its ranges, Theorem \ref{incidence-theorem} improves Theorem \ref{motbig} in both lower and upper bounds. Theorem \ref{incidence-theorem} is sharp in the sense that one can construct sets $P$ and $S$ with $|S|$ arbitrary small and $|P||S|\le q^{d+1}$ such that $I(P, S)=0$.

In the first result we provide an improvement in odd dimensions when $|P|\sim |S|$. 
\begin{theorem}\label{new-incidence}
Let $P$ be a set of points and $S$ be a set of spheres of \textbf{square} radii in $\mathbb{F}_q^d$. Suppose that $d\equiv 3\mod 4$ and $q\equiv 3\mod 4$. If $|P|, |S|\le N$, then we have 
\[I(P, S)\ll q^{-1}N^2+q^{\frac{d-1}{2}}N.\]
\end{theorem}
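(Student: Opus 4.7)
The plan is to recast this as a point--cone incidence problem in $\mathbb{F}_q^{d+1}$. The square-radius hypothesis lets us lift each sphere $(c, s^2) \in S$ to the (at most) two points $(c, \pm s) \in \mathbb{F}_q^{d+1}$; writing $\tilde{P} = P \times \{0\}$ and $\tilde{S}$ for this lifted multi-set, the incidence condition $|p-c|^2 = s^2$ becomes $(p - c, s) \in V$, where $V = \{(x, t) \in \mathbb{F}_q^{d+1} : |x|^2 = t^2\}$ is the isotropic cone. Up to a factor of at most $2$, $I(P, S)$ equals the translation-invariant count $\#\{(\tilde{p}, \tilde{s}) \in \tilde{P} \times \tilde{S} : \tilde{p} - \tilde{s} \in V\}$.

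I would then compute $\widehat{1_V}$ exactly via the standard character-sum expansion of $1_{|x|^2 - t^2 = 0}$. Under the parity hypotheses $d \equiv 3 \pmod 4$ and $q \equiv 3 \pmod 4$, the quadratic Gauss sum satisfies $g(\lambda)^2 = -q$, and because $d + 1$ is even one obtains $g(\lambda)^d g(-\lambda) = -q^{(d+1)/2}$, independent of $\lambda \neq 0$, so no quadratic-character factor survives. This yields the clean evaluation $\widehat{1_V}(\xi, \tau) = q^{(d-1)/2}$ for $(\xi, \tau) \notin V$ and $\widehat{1_V}(\xi, \tau) = q^{(d-1)/2} - q^{(d+1)/2}$ for $(\xi, \tau) \in V \setminus \{0\}$. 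Inserting this into Plancherel, the $\xi = 0$ frequency produces the main term $|P||S|/q \ll N^2/q$, and the remaining error splits into an ``off-cone'' piece with prefactor $q^{(d-1)/2}$ and an ``on-cone'' piece supported on $V \setminus \{0\}$ with prefactor $q^{(d+1)/2}$. The off-cone piece collapses via Parseval to a quantity controlled by $|\tilde{P} \cap \tilde{S}| \le N$, so contributes $\ll q^{(d-1)/2} N$.

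The main obstacle is bounding the on-cone piece $q^{-(d+1)/2} \sum_{(\xi, \tau) \in V \setminus \{0\}} \widehat{1_{\tilde{P}}}(\xi, \tau) \overline{\widehat{1_{\tilde{S}}}(\xi, \tau)}$. Here I would apply Cauchy--Schwarz on frequencies in $V$ and exploit two structural facts. First, since $\tilde{P}$ lies on the hyperplane $\{t = 0\}$, its Fourier transform is constant in $\tau$, and because only at most two values of $\tau$ lie over each $\xi$ on $V$, Plancherel in $\mathbb{F}_q^d$ immediately gives $\sum_{(\xi, \tau) \in V} |\widehat{1_{\tilde P}}(\xi, \tau)|^2 \ll q^d |P|$, a saving of a factor of $q$ over the trivial bound. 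Second, to control $\sum_{(\xi, \tau) \in V} |\widehat{1_{\tilde S}}(\xi, \tau)|^2$ by the matching $\ll q^d |S|$, I would use the $s \mapsto -s$ symmetry of $\tilde{S}$ (which forces $\widehat{1_{\tilde{S}}}(\xi, \tau)$ to be even in $\tau$) combined with an $L^2$-restriction-type estimate for the cone $V$ in $\mathbb{F}_q^{d+1}$ in the spirit of the cone Fourier analysis developed in \cite{KLP}, the hypothesis $d+1$ even being essential here. Cauchy--Schwarz then gives $q^{-(d+1)/2} (q^d |P|)^{1/2}(q^d |S|)^{1/2} \ll q^{(d-1)/2} \sqrt{|P||S|} \le q^{(d-1)/2} N$, completing the proof. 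The crux is the second bound on $\tilde{S}$, where the square-radius hypothesis and the parity conditions on $d$ and $q$ are each essential.
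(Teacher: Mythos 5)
Your setup---lifting $P$ to $\tilde P=P\times\{0\}$ and each sphere of radius $s^2$ to $(c,\pm s)$ in $\mathbb{F}_q^{d+1}$, and evaluating the Fourier transform of the cone $V=\{|x|^2=t^2\}$ under the parity hypotheses---is exactly the paper's (there $V$ is the cone $C_{d+1}$ and the count is phrased as an edge count in the Cayley graph $G_{Q,d+1}$), and your values $q^{(d-1)/2}$ off the cone and $q^{(d-1)/2}-q^{(d+1)/2}$ on it agree with Lemma 3.2. The divergence, and the gap, is in the on-cone term. The estimate you call the crux, $\sum_{(\xi,\tau)\in V}|\widehat{1_{\tilde S}}(\xi,\tau)|^2\ll q^{d}|S|$, is false in general, not merely unproved. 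Expanding the square and applying your own formula for $\widehat{1_V}$ gives, for any $E\subset\mathbb{F}_q^{d+1}$, the exact identity
\[\sum_{m\in V}|\widehat{1_E}(m)|^2=q^{d}|E|+q^{\frac{d-1}{2}}|E|^2-q^{\frac{d+1}{2}}\,\#\{(x,y)\in E^2:\ x-y\in V\},\]
and the term $q^{(d-1)/2}|E|^2$ overtakes $q^{d}|E|$ as soon as $|E|>q^{(d+1)/2}$; nothing in the theorem's hypotheses rules this out. Concretely, taking $\tilde S=V$ itself (the spheres centered at $c$ of radius $||c||$ for those $c$ with $||c||$ a square---a legitimate $S$, and this $\tilde S$ is even in $\tau$) gives $\sum_{m\in V}|\widehat{1_{\tilde S}}(m)|^2\sim q^{2d+1}$ against $q^{d}|\tilde S|\sim q^{2d}$, a loss of a full factor of $q$. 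Neither the $\tau\mapsto-\tau$ symmetry nor the evenness of $d+1$ repairs this.

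The argument can be salvaged in two ways. (i) Keep the extra term: for $N\le q^{(d+1)/2}$ your Cauchy--Schwarz does give $\ll q^{(d-1)/2}N$, while for $N>q^{(d+1)/2}$ it yields $\ll q^{(d-3)/4}N^{3/2}$, which is then dominated by the main term $N^2/q$; this case split has to be made explicit. (ii) The paper's route avoids the restriction estimate entirely: set $W=\tilde P\cup\tilde S$ and bound $I(P,S)$ by the number of edges of $G_{Q,d+1}$ inside the single set $W$. For one set the on-cone contribution is $-q^{-(d+1)/2}\sum_{m\in V\setminus\{0\}}|\widehat{1_W}(m)|^2\le 0$ and is simply discarded; equivalently, the only positive nontrivial eigenvalue of $G_{Q,d+1}$ is $q^{(d-1)/2}$, so the one-sided expander mixing lemma (Lemma 5.1) gives $e(W,W)\le |W|^2/q+q^{(d-1)/2}|W|$ outright. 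That sign observation is precisely where $d\equiv 3$, $q\equiv 3\pmod 4$ and the squareness of the radii enter, and it is the reason the theorem is stated with $|P|,|S|\le N$ rather than with $\sqrt{|P||S|}$.
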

It is worth noting that one cannot expect to prove the same upper bound in even dimensions ($d\equiv 2\mod 4$ and $q\equiv 1\mod 4,$ or $d\equiv 0 \mod 4$). This follows from the second observation above.
\begin{corollary}
Let $P$ be a set of points and $S$ be a set of spheres of \textbf{square} radii in $\mathbb{F}_q^3$. Suppose that $q\equiv 3\mod 4$, and $|P|=|S|\sim q^2$, then we have 
\[I(P, S)\ll |P|^{3/4}|S|^{3/4}.\]
\end{corollary}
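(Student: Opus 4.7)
The plan is to deduce the corollary as a direct specialization of Theorem~\ref{new-incidence}. The hypotheses $d \equiv 3 \pmod 4$ and $q \equiv 3 \pmod 4$ required in that theorem are both satisfied, since we are working with $d = 3$ and the corollary assumes $q \equiv 3 \pmod 4$. The spheres are assumed to have square radii, matching the hypothesis of Theorem~\ref{new-incidence}. So the theorem applies with $N \sim q^2$.

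Next, I would plug $d = 3$ and $N \sim q^2$ into the bound
\[
I(P,S) \ll q^{-1}N^2 + q^{\frac{d-1}{2}}N.
\]
The first term evaluates to $q^{-1} N^2 \sim q^{-1} \cdot q^4 = q^3$, and the second term evaluates to $q^{(d-1)/2} N = q \cdot q^2 = q^3$. Therefore both terms on the right are of order $q^3$, giving $I(P,S) \ll q^3$.

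Finally, I would rewrite $q^3$ in terms of $|P|$ and $|S|$. Since $|P| \sim |S| \sim q^2$, we have
\[
|P|^{3/4}|S|^{3/4} \sim (q^2)^{3/4}(q^2)^{3/4} = q^3,
\]
so $I(P,S) \ll |P|^{3/4}|S|^{3/4}$, as desired.

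There is no real obstacle here; the statement is a straightforward arithmetic consequence of Theorem~\ref{new-incidence} in the balanced critical regime where the two terms $q^{-1}N^2$ and $q^{(d-1)/2}N$ meet. The only thing to double-check is that the ``square radii'' assumption is preserved (it is, verbatim) and that $N := \max(|P|,|S|) \sim q^2$ so that the theorem is applied in a sharp way.
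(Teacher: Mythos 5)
Your proposal is correct and matches the paper's (implicit) derivation: the corollary is stated as a direct specialization of Theorem~\ref{new-incidence} with $d=3$ and $N\sim q^2$, where both terms $q^{-1}N^2$ and $q^{(d-1)/2}N$ equal $q^3 \sim |P|^{3/4}|S|^{3/4}$. Nothing further is needed.
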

The proof of Theorem \ref{new-incidence} is based on a careful analysis of spectrum of graphs defined by cone equations. In particular, let $C_k$ be the cone in $\mathbb F_q^k$ defined by 
 \begin{equation}\label{moi}C_k:=\{x\in \mathbb{F}_q^k\colon Q(x)=-x_1^2+x_2^2+\cdots+x_k^2=0\}.\end{equation}
Let $G_{Q, k}$ be the Cayley graph with the vertex set $\mathbb{F}_q^k$, there is an edge between two vertices $x$ and $y$ if and only if $x-y\in C_k$.  It is clear that $G_{Q, k}$ is a regular graph of order $|C_k|$. In the following theorem, we show that when $k\equiv 0\mod 4$ and $q\equiv 3\mod 4$,  the unique positive and non-trivial eigenvalue of this graph is much smaller than the absolute value of others. This observation plays the main role in the proof of Theorem \ref{new-incidence}.
\begin{theorem}\label{eigenvalue}
Let $\{\lambda_{m}\}_{m\in \mathbb{F}_q^k}$ be the eigenvalues of $G_{Q, k}$. If $k\equiv 0\mod 4$ and $q\equiv 3 \mod{4},$ then we have
 $$ \lambda_m=q^k\cdot \left\{ \begin{array}{ll} q^{-1}\delta_0(m)- q^{-\frac{k}{2}}+q^{-\frac{(k+2)}{2}}  \quad &\mbox{if}~~ m\in C_k\\
 q^{-\frac{(k+2)}{2}}   \quad &\mbox{if} ~~ m\notin C_k. \end{array}\right.$$
Here, and throughout the paper, we define $\delta_0(m)=1$ if  $m=(0, \ldots, 0)$, and $\delta_0(m)=0$ otherwise.
\end{theorem}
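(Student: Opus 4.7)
The plan is to exploit the standard spectral description of a Cayley graph on an abelian group: the eigenvalues of $G_{Q,k}$ are indexed by frequencies $m\in\mathbb{F}_q^k$ and given by the Fourier transform of the indicator of the connection set,
$$\lambda_m=\sum_{x\in C_k}\chi(m\cdot x),$$
where $\chi$ is a fixed nontrivial additive character of $\mathbb{F}_q$. Inserting the orthogonality identity $\mathbf{1}_{Q(x)=0}=q^{-1}\sum_{t\in\mathbb{F}_q}\chi(tQ(x))$ and peeling off the $t=0$ term, I obtain
$$\lambda_m=q^{k-1}\delta_0(m)+\frac{1}{q}\sum_{t\in\mathbb{F}_q^{*}}\sum_{x\in\mathbb{F}_q^k}\chi\bigl(tQ(x)+m\cdot x\bigr).$$

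For each nonzero $t$ the inner sum factors across coordinates because $Q$ is diagonal, and completing the square in each coordinate reduces it to a product of one-dimensional twisted Gauss sums. Using the evaluation $\sum_{x\in\mathbb{F}_q}\chi(ax^2+bx)=\eta(a)\,G\,\chi(-b^2/(4a))$ for $a\neq 0$, where $\eta$ is the quadratic character and $G=\sum_{y}\chi(y^2)$ satisfies $G^2=\eta(-1)q$, the inner sum becomes
$$\eta(t)^{k}\,\eta(a_1\cdots a_k)\,G^{k}\,\chi\!\left(-\frac{Q(m)}{4t}\right),$$
with diagonal entries $a_1=-1$, $a_2=\cdots=a_k=1$, so that $\eta(a_1\cdots a_k)=\eta(-1)$. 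The hypothesis $k\equiv 0\bmod 4$ kills the factor $\eta(t)^k$ and gives $G^k=(\eta(-1)q)^{k/2}=q^{k/2}$ (since $k/2$ is even), while $q\equiv 3\bmod 4$ gives $\eta(-1)=-1$. Hence the entire prefactor collapses to $-q^{k/2}$.

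It remains to compute $\sum_{t\in\mathbb{F}_q^{*}}\chi(-Q(m)/(4t))$. If $m\in C_k$, i.e. $Q(m)=0$, every term equals $1$ and the sum is $q-1$; otherwise the map $t\mapsto -Q(m)/(4t)$ bijects $\mathbb{F}_q^{*}$ with itself and the sum reduces to $\sum_{s\in\mathbb{F}_q^{*}}\chi(s)=-1$. Plugging these two values back into the formula for $\lambda_m$ and factoring $q^k$ outside produces exactly the two cases of the claimed identity.

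The main obstacle is bookkeeping of the signs produced by the Gauss-sum identities: the congruence conditions $k\equiv 0\bmod 4$ and $q\equiv 3\bmod 4$ are precisely what force $\eta(t)^k=1$ and $\eta(-1)^{k/2}=1$, so that the character sum is real-valued and the $-q^{k/2}$ prefactor appears with a definite sign. This is what creates the asymmetry in the spectrum (a single small positive eigenvalue of order $q^{k/2-1}$ versus negative eigenvalues of order $q^{k/2}$ on $C_k\setminus\{0\}$) that drives the improvement in Theorem~\ref{new-incidence}.
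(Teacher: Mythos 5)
Your proof is correct and follows essentially the same route as the paper: writing the eigenvalues as $q^k\,\widehat{C_k}(m)$, detecting $Q(x)=0$ by character orthogonality, completing the square coordinatewise to reduce to Gauss sums, and evaluating the remaining sum over $t\in\mathbb{F}_q^{*}$ according to whether $Q(m)=0$. The only cosmetic difference is that you compute $\mathcal{G}_1^{k}=q^{k/2}$ from $\mathcal{G}_1^{2}=\eta(-1)q$ rather than from the explicit Lidl--Niederreiter formula used in the paper; the two are equivalent.
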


Our next improvement is for spheres of non-square radii in dimensions $d\equiv 1\mod 4$. 
\begin{theorem}\label{new-incidence11}
Let $P$ be a set of points and $S$ be a set of spheres of \textbf{non-square} radii in $\mathbb{F}_q^d$. Suppose that $d\equiv 1\mod 4$ and $q\equiv 3\mod 4$. If $|P|, |S|\le N$, then we have 
\[I(P, S)\ll q^{-1}N^2+q^{\frac{d-1}{2}}N.\]
\end{theorem}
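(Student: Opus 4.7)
The plan is to imitate the spectral strategy used to prove Theorem~\ref{new-incidence}, but with a cone Cayley graph adapted to the congruences $d\equiv 1\pmod 4$, $q\equiv 3\pmod 4$, and the non-square radius hypothesis.

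First I would lift the incidence problem to $\mathbb{F}_q^{d+1}$. Since $q\equiv 3\pmod 4$, $-1$ is a non-square in $\mathbb{F}_q$, so for every non-square $r\in\mathbb{F}_q^*$ one can pick $\alpha_r\in\mathbb{F}_q$ with $\alpha_r^2=-r$. Set $\tilde p:=(p,0)\in\mathbb{F}_q^{d+1}$ for each $p\in P$ and $\tilde s:=(c,\alpha_r)\in\mathbb{F}_q^{d+1}$ for each $s=S(c,r)\in S$. Then
$$\|\tilde p-\tilde s\|^2=\|p-c\|^2+\alpha_r^2=\|p-c\|^2-r,$$
so $p\in s$ if and only if $\tilde p-\tilde s$ lies on the Euclidean cone $E_k:=\{x\in\mathbb{F}_q^k: x_1^2+\cdots+x_k^2=0\}$ with $k=d+1$. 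Hence $I(P,S)$ equals the number of edges from $\tilde P$ to $\tilde S$ in the Cayley graph $G_{E,k}$ on $\mathbb{F}_q^k$ with connection set $E_k$. The non-square hypothesis also forces $\alpha_r\ne 0$ for every $r$, so $\tilde P\cap\tilde S=\emptyset$ and no artificial loop terms appear.

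The crux is then the spectral analog of Theorem~\ref{eigenvalue} for $G_{E,k}$ with $k\equiv 2\pmod 4$ and $q\equiv 3\pmod 4$: I expect to prove exactly the same asymmetric formula
$$\lambda_m=q^k\cdot\begin{cases} q^{-1}\delta_0(m)-q^{-k/2}+q^{-(k+2)/2} & \text{if } m\in E_k,\\ q^{-(k+2)/2} & \text{if } m\notin E_k, \end{cases}$$
so that, in particular, the unique positive non-trivial eigenvalue is $q^{(k-2)/2}=q^{(d-1)/2}$. The derivation follows the same Fourier recipe as Theorem~\ref{eigenvalue}: expand $\mathbf{1}_{E_k}(x)=q^{-1}\sum_{t\in\mathbb{F}_q}\chi(t\|x\|^2)$ inside $\lambda_m=\sum_{x\in E_k}\chi(m\cdot x)$, complete the square coordinate by coordinate, and evaluate the resulting product of Gauss sums. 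The only new algebraic input is that for $q\equiv 3\pmod 4$ the standard quadratic Gauss sum $G$ satisfies $G^2=\eta(-1)q=-q$, so $G^k=-q^{k/2}$ precisely when $k\equiv 2\pmod 4$; this is the same normalization that arose from the Lorentzian twist in Theorem~\ref{eigenvalue} (which used $k\equiv 0\pmod 4$), which is why one obtains an identical eigenvalue formula. The leftover character sum over $t\neq 0$ is then elementary, splitting on whether $\|m\|^2=0$.

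With the spectrum in hand, the one-sided spectral incidence bound developed for Theorem~\ref{new-incidence} applies verbatim to $G_{E,d+1}$: the degree satisfies $\lambda_0\sim q^{d}$, the vertex count is $q^{d+1}$, and every positive non-trivial eigenvalue is at most $q^{(d-1)/2}$, which yields
$$I(P,S)=|E(\tilde P,\tilde S)|\ll \frac{\lambda_0}{q^{d+1}}\,|\tilde P||\tilde S|+q^{(d-1)/2}\sqrt{|\tilde P||\tilde S|}\ll q^{-1}N^2+q^{(d-1)/2}N.$$
I expect the main obstacle to be the eigenvalue computation: one has to redo the Gauss-sum bookkeeping for the Euclidean form rather than the Lorentzian form of Theorem~\ref{eigenvalue} and verify carefully that the congruences $k\equiv 2\pmod 4$ and $q\equiv 3\pmod 4$ really conspire to reproduce the same $-q^{k/2}$ normalization responsible for the asymmetric spectrum. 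Once that is secured, the incidence bound follows immediately from the spectral framework already in place for Theorem~\ref{new-incidence}.
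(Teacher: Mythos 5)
Your proposal follows essentially the same route as the paper: lift points to $(p,0)$ and spheres of non-square radius $r$ to $(c,\alpha_r)$ with $\alpha_r^2=-r$ (possible since $-1$ is a non-square for $q\equiv 3\bmod 4$), so that incidences become edges of the Cayley graph on $\mathbb{F}_q^{d+1}$ generated by the zero sphere $S_0^{k-1}$, and then exploit that for $k\equiv 2\bmod 4$, $q\equiv 3\bmod 4$ the only positive non-trivial eigenvalue is $q^{(k-2)/2}$; this is precisely Theorem \ref{eigenvalue198} (which the paper imports from the literature rather than rederiving) combined with the one-sided expander mixing argument. One small caution: the displayed inequality $e(\tilde P,\tilde S)\ll \frac{\lambda_0}{q^{d+1}}|\tilde P||\tilde S|+q^{(d-1)/2}\sqrt{|\tilde P||\tilde S|}$ does not follow from controlling only the \emph{positive} eigenvalues, since the bipartite cross terms $\langle 1_{\tilde P},v_i\rangle\langle 1_{\tilde S},v_i\rangle$ need not be non-negative; as in Lemma \ref{expander} one must set $W=\tilde P\cup\tilde S$ and bound $e(\tilde P,\tilde S)\le e(W,W)\le q^{-1}|W|^2+q^{(d-1)/2}|W|$, which gives the same conclusion because $|W|\le 2N$.
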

\begin{corollary}
Let $P$ be a set of points and $S$ be a set of spheres of \textbf{non-square} radii in $\mathbb{F}_q^5$. Suppose that $q\equiv 3\mod 4$, and $|P|=|S|\sim q^3$, then we have 
\[I(P, S)\ll |P|^{5/6}|S|^{5/6}.\]
\end{corollary}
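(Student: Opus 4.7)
The plan is to parallel the proof of Theorem \ref{new-incidence}, adjusting the underlying cone to handle non-square radii. I would fix a non-square $\alpha \in \Fq$; since both $r$ and $\alpha$ are non-squares for every $(c, r) \in S$, the ratio $r/\alpha$ is a square, so one can write $r = \alpha \beta^2$ for some $\beta \in \Fq$ (defined up to sign). The Cayley graph will live on $\Fq^{d+1}$ with connection set
\[C' = \{(u, t) \in \Fq^{d+1} : u_1^2 + \cdots + u_d^2 - \alpha t^2 = 0\},\]
and I would lift $\tilde{P} = \{(x, 0) : x \in P\}$ together with $\tilde{S} = \{(c, \pm \beta) : (c, r) \in S,\ \beta^2 = r/\alpha\}$. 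Edges of this Cayley graph between $\tilde{P}$ and $\tilde{S}$ count point-sphere incidences with a uniform multiplicity $2$ from the two signs of $\beta$.

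The first task will be to derive the analogue of Theorem \ref{eigenvalue} for the cone $C'$, which now lives in $k = d+1 \equiv 2 \pmod 4$ variables. The character-sum / completing-the-square computation runs in parallel to Theorem \ref{eigenvalue}, with two key quantities whose sign flips cancel: the $k$-th power of the Gauss sum becomes $G(\chi)^k = (-q)^{k/2} = -q^{k/2}$ (since $k/2$ is now odd), while the discriminant $-\alpha$ of the new form has quadratic character $\eta(-\alpha) = \eta(-1)\eta(\alpha) = (-1)(-1) = 1$ under the hypotheses $q \equiv 3 \pmod 4$ and $\alpha$ non-square (in contrast to $\eta(-1) = -1$ for the discriminant $-1$ in Theorem \ref{eigenvalue}). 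The two sign changes cancel, so the eigenvalues $\lambda_m$ of the new Cayley graph still satisfy
\[\lambda_m = q^{k-1} \delta_0(m) - q^{k/2}\mathbf{1}_{\tilde{C}'}(m) + q^{k/2 - 1},\]
where the dual cone $\tilde{C}' = \{m \in \Fq^{d+1} : \alpha \|m'\|^2 = m_{d+1}^2\}$ plays the role of $C_k$.

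Plugging this formula into a Parseval-based expansion, the incidence count will decompose as
\[I(P, S) = q^{-1}|P||S| - q^{-k/2}\sum_{m \in \tilde{C}'\setminus\{0\}} \overline{\widehat{\tilde{P}}(m)} \widehat{\tilde{S}}(m) + O\!\left(q^{(d-1)/2}\sqrt{|P||S|}\right),\]
with the $O$-term absorbing the off-cone contributions (bounded via Parseval and Cauchy-Schwarz using the small-eigenvalue size $q^{(k-2)/2}$). Cauchy-Schwarz on the remaining on-cone sum then reduces the problem to estimating $\sum_{m \in \tilde{C}'} |\widehat{\tilde{P}}(m)|^2$ and $\sum_{m \in \tilde{C}'} |\widehat{\tilde{S}}(m)|^2$. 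For $\tilde{P}$, since the set lies in the hyperplane $\{t = 0\}$, the Fourier coefficient $\widehat{\tilde{P}}(m)$ depends only on $m' = (m_1, \ldots, m_d)$, and for each fixed $m'$ at most two values of $m_{d+1}$ satisfy $m_{d+1}^2 = \alpha \|m'\|^2$; Plancherel in the first $d$ coordinates then gives $\sum_{m \in \tilde{C}'}|\widehat{\tilde{P}}(m)|^2 \leq 2 q^d |P|$.

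The hard part will be the analogous bound on $\sum_{m \in \tilde{C}'} |\widehat{\tilde{S}}(m)|^2$, since $\tilde{S}$ does not sit in a single hyperplane. My approach is to expand
\[\sum_{m \in \tilde{C}'} |\widehat{\tilde{S}}(m)|^2 = \sum_{y_1, y_2 \in \tilde{S}} \mu(y_2 - y_1),\]
where $\mu(y) = q^{k-1}\delta_0(y) - q^{k/2}\mathbf{1}_{C'}(y) + q^{k/2-1}$ is obtained by applying the eigenvalue formula to the dual graph, and then to bound the count of pairs $(y_1, y_2) \in \tilde{S}^2$ with $y_1 - y_2 \in C'$. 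Translating this condition into sphere data yields the concrete geometric relation $\|c_1 - c_2\|^2 = \alpha(\beta_1 - \beta_2)^2$ on pairs of sphere parameters, whose solutions can be counted using the $\beta \mapsto -\beta$ symmetry of $\tilde{S}$ together with the fact that the radii are non-square. Granting the resulting estimate $\sum_{m \in \tilde{C}'}|\widehat{\tilde{S}}(m)|^2 \ll q^d|S|$ in the relevant range, combining everything and absorbing the lift multiplicity of $2$ yields the claimed $I(P,S) \ll q^{-1} N^2 + q^{(d-1)/2} N$.
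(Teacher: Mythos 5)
Your overall strategy can be made to work, but it is far more elaborate than what the paper does, and it contains one explicitly acknowledged hole. The paper obtains this corollary by simply plugging $d=5$ and $N\sim q^3$ into Theorem \ref{new-incidence11} (note $q^{-1}N^2+q^{2}N\sim q^5=(|P||S|)^{5/6}$), and Theorem \ref{new-incidence11} itself is proved not by your bipartite Fourier decomposition but by a one-sided expander mixing argument: lift $P$ and $S$ into $\mathbb{F}_q^{6}$ as you do (the paper uses the zero sphere $S_0^{k-1}$, which is the case $\alpha=-1$ of your cone $C'$; in fact $-\alpha$ is always a square under your hypotheses, so $C'$ is a linear image of $S_0^{k-1}$ and your eigenvalue formula is Theorem \ref{eigenvalue198} in disguise --- your sign bookkeeping does check out), set $W=\tilde{P}\cup \tilde{S}$, and bound $I(P,S)\le e(W,W)$. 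Since $e(W,W)$ is a sum of $\lambda_m|\widehat{W}(m)|^2$ with $|\widehat{W}(m)|^2\ge 0$, every negative eigenvalue --- in particular the large one, $-q^{k/2}+q^{(k-2)/2}$, on the cone --- can simply be discarded, and the unique positive nontrivial eigenvalue $q^{(k-2)/2}$ yields $e(W,W)\le |W|^2/q+q^{(k-2)/2}|W|$ immediately (the analogue of Lemma \ref{expander}). This is exactly where the hypothesis $|P|\sim|S|$ enters.

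The gap in your version is the estimate $\sum_{m\in\tilde{C}'}|\widehat{\tilde{S}}(m)|^2\ll q^d|S|$, which you grant rather than prove; moreover, your proposed route to it (counting pairs with $y_1-y_2\in C'$ via the $\beta\mapsto-\beta$ symmetry) is unnecessary. In the expansion $\sum_{m\in\tilde{C}'}|\widehat{\tilde{S}}(m)|^2=q^{k-1}|\tilde{S}|-q^{k/2}\,\#\{(y_1,y_2)\in\tilde{S}^2\colon y_1-y_2\in C'\}+q^{(k-2)/2}|\tilde{S}|^2$, the pair-count carries a negative coefficient, so for an upper bound you may drop it, leaving $q^{k-1}|\tilde{S}|+q^{(k-2)/2}|\tilde{S}|^2\ll q^d|S|$ precisely when $|S|\ll q^{(d+1)/2}$ --- the borderline of your hypothesis $|S|\sim q^3$ in $d=5$. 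Once you see that the sign of the on-cone eigenvalue is doing all the work, you should also see that applying that observation to $W=\tilde{P}\cup\tilde{S}$, rather than to $\tilde{P}$ and $\tilde{S}$ separately, eliminates the bipartite decomposition, both restriction-type estimates, and the Cauchy--Schwarz step entirely.
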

Unlike Theorem \ref{new-incidence}, we do not have any construction to show that the upper bound $q^{-1}N^2+q^{\frac{d-1}{2}}N$ is impossible for even dimensions. 

Theorem \ref{new-incidence11} is proved by the same approach as for Theorem \ref{new-incidence}. The main difference is that we use the Cayley graph defined by the zero-norm equation. In particular, let $S_0^{k-1}$ be the sphere centered at the origin of radius zero in $\mathbb F_q^k$ defined by 
 \begin{equation}\label{moi11}S_0^{k-1}:=\{x\in \mathbb{F}_q^k\colon ||x||:=x_1^2+x_2^2+\cdots+x_k^2=0\}.\end{equation}
Let $G_{||\cdot ||, k}$ be the Cayley graph with the vertex set $\mathbb{F}_q^k$, there is an edge between two vertices $x$ and $y$ if and only if $x-y\in S_0^{k-1}$.  It is clear that $G_{||\cdot||, k}$ is a regular graph of order $|S_0^{k-1}|$. As in the graph $G_{Q, k}$, in the following theorem, we show that when $k\equiv 2\mod 4$ and $q\equiv 3\mod 4$,  the unique positive and non-trivial eigenvalue of this graph is much smaller than the absolute value of others. 
\begin{theorem}\label{eigenvalue198}
Let $\{\lambda_{m}\}_{m\in \mathbb{F}_q^k}$ be the eigenvalues of $G_{||\cdot||, k}$. If $k\equiv 2\mod 4$ and $q\equiv 3 \mod{4},$ then we have
 $$ \lambda_m=q^k\cdot \left\{ \begin{array}{ll} q^{-1}\delta_0(m)- q^{-\frac{k}{2}} + q^{-\frac{k+2}{2}} \quad &\mbox{if}~~ ||m||=0\\
 q^{-\frac{(k+2)}{2}}   \quad &\mbox{if} ~~ ||m||\ne 0. \end{array}\right.$$
\end{theorem}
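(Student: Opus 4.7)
The plan is to compute $\lambda_m$ directly as a character sum and evaluate it via Gauss sums, in exact parallel to the proof of Theorem \ref{eigenvalue}, the only difference being that the defining quadratic form here is positive-definite rather than of signature $(1,k-1)$. Since $G_{\|\cdot\|,k}$ is a Cayley graph on the abelian group $\mathbb{F}_q^k$, its eigenvalues are indexed by $m \in \mathbb{F}_q^k$ via
\[
\lambda_m \;=\; \sum_{x \in S_0^{k-1}} \chi(m \cdot x),
\]
where $\chi$ is a fixed nontrivial additive character of $\mathbb{F}_q$. The task reduces to evaluating this sum under the hypotheses $k \equiv 2 \pmod 4$ and $q \equiv 3 \pmod 4$.

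First I would detect the constraint $\|x\|=0$ by orthogonality,
\[
\mathbf{1}_{\|x\|=0} \;=\; \frac{1}{q}\sum_{t\in\mathbb{F}_q}\chi(t\|x\|),
\]
swap the order of summation, and isolate the $t=0$ contribution, which yields $q^{k-1}\delta_0(m)$. For each $t \neq 0$ I would complete the square: writing $y = x + (2t)^{-1}m$ (legitimate because $q$ is odd), the exponent becomes $t\|y\| - \|m\|/(4t)$, and the sum over $x$ factorizes as
\[
\chi\!\bigl(-\|m\|/(4t)\bigr)\prod_{i=1}^{k}\sum_{y_i\in\mathbb{F}_q}\chi(t y_i^2) \;=\;\chi\!\bigl(-\|m\|/(4t)\bigr)\,\eta(t)^k G^k,
\]
where $\eta$ is the quadratic character of $\mathbb{F}_q^{\ast}$ and $G=\sum_{z}\chi(z^2)$ is the standard quadratic Gauss sum.

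Now I would plug in the congruence hypotheses to simplify $\eta(t)^k G^k$. Because $k$ is even, $\eta(t)^k = 1$ for $t \neq 0$, which kills the quadratic character entirely. Because $q \equiv 3 \pmod 4$, $\eta(-1)=-1$ forces $G^2 = -q$, and hence $G^k = (-q)^{k/2}$. Since $k\equiv 2 \pmod 4$, $k/2$ is odd, so $G^k = -q^{k/2}$. Thus the total $t \neq 0$ contribution is
\[
-\,q^{k/2-1}\sum_{t\in\mathbb{F}_q^{\ast}}\chi\!\bigl(-\|m\|/(4t)\bigr).
\]

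Finally I would split on whether $\|m\|=0$. When $\|m\|=0$ the inner sum is $q-1$, producing $-q^{k/2}+q^{k/2-1}$; combined with the $t=0$ contribution $q^{k-1}\delta_0(m)$ and factoring out $q^k$, this matches the first branch of the claim. When $\|m\| \neq 0$, the substitution $s = -\|m\|/(4t)$ turns the inner sum into $\sum_{s\neq 0}\chi(s)=-1$, producing $q^{k/2-1}$; since $\|m\|\neq 0$ forces $m\neq 0$, the $\delta_0(m)$ term vanishes and we obtain the second branch. The only real subtlety is the bookkeeping around the sign of $G^k$: a different residue of $k$ or $q$ modulo $4$ would flip the sign of the dominant term $-q^{-k/2}$, so the two congruences $k\equiv 2\pmod 4$ and $q\equiv 3\pmod 4$ must be used in tandem to secure the correct sign.
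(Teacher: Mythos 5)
Your proposal is correct and follows essentially the same route as the paper: the paper reduces Theorem \ref{eigenvalue198} to the formula $\widehat{S_0^{k-1}}(m)=q^{-1}\delta_0(m)+q^{-k-1}\eta^{k}(-1)\mathcal{G}_1^k\sum_{r\ne 0}\eta^k(r)\chi\bigl(\|m\|/(4r)\bigr)$ cited from \cite[Lemma 2.2--2.3]{iosevichcanada}, and that formula is obtained by exactly your steps (orthogonality in $t$, completing the square, $\eta^k\equiv 1$ for $k$ even, and $\mathcal{G}_1^k=(\eta(-1)q)^{k/2}=-q^{k/2}$ when $k\equiv 2$, $q\equiv 3 \pmod 4$). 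The only difference is that you carry out the Gauss-sum computation in full where the paper outsources it to a citation, mirroring its own Section 3 argument for the cone graph.
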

In graph theoretic point of view, we believe that Theorems \ref{eigenvalue} and \ref{eigenvalue198} have a potential for applications to other topics.

\paragraph{Sharpness of Theorems \ref{new-incidence} and \ref{new-incidence11}:} Both Theorems \ref{new-incidence} and \ref{new-incidence11} cannot improved when $N>q^{\frac{d+1}{2}}$. The simplest example is to take $S$ being a set of spheres with the same radius, then Observation $3$ would tell us that $I(P, S)\sim N^2/q$. When $N>q^{\frac{d+2}{2}}$, Theorem \ref{motbig} also tells us that the number of incidences is at least $(1-o(1))N^2/q$.

We now provide some applications.

\paragraph{\textbf{Erd\H{o}s-Falconer distance problem}:} For any two points $x$ and $y$ in $\mathbb{F}_q^d$, we define its distance function by $||x-y||=(x_1-y_1)^2+\cdots+(x_d-y_d)^2$. For $E\subset \mathbb{F}_q^d$ and $t\ne 0$, let $U(t)$ be the number of pairs of points in $E$ of distance $t$. Iosevich and Rudnev \cite{iosevich-rudnev}, using the Kloosterman sum,  proved that 
\begin{equation}\label{upper}
\frac{|E|^2}{q}-2q^{\frac{d-1}{2}}|E|\le U(t)\le \frac{|E|^2}{q}+2q^{\frac{d-1}{2}}|E|
\end{equation}
As a consequence of Theorem \ref{new-incidence}, we can see that the upper bound of (\ref{upper}) can be recovered when $t$ is a square. The same holds when $t$ is a non-square by Theorem \ref{new-incidence11}. The most interesting aspect of this observation is that we are able to use Gauss sums instead of Kloosterman sum in the proof. It is still an open question whether or not one can prove the lower bound of (\ref{upper}) without the Kloosterman sum. 
\vspace*{0.5cm}
\paragraph{\textbf{A sum-product type estimate:}} 
For $A\subset \mathbb{F}_q$,  we define 
\[A+A:=\{a+b\colon a, b\in A\}, ~A^2:=\{a^2\colon a\in A\}, ~nA^2=\left\lbrace a_1+\cdots+a_n\colon a_i\in A^2\right\rbrace.\]
As a consequence of Theorem \ref{new-incidence}, we obtain the following sum-product type estimate. 


\begin{theorem}\label{sum-product}
Let $A$ be a set in $\mathbb{F}_q$ with $q\equiv 3\mod 4$ and $|A|\gg q^{1/2}$. For $d\ge 3$ odd, we  have at least one of two following statements:
\begin{enumerate}
\item $|A+A|\ge \min \left\lbrace q^{\frac{d+1}{2d}}, |A|^{\frac{d+1}{d}}\right\rbrace$.
\item $|dA^2|\gg \frac{|A|^d}{q^{\frac{d-1}{2}}}$. 
\end{enumerate}
\end{theorem}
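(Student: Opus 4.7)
The plan is to turn the sum-product statement into a point-sphere incidence problem and then invoke Theorem~\ref{new-incidence} (when $d\equiv 3\pmod 4$) or Theorem~\ref{new-incidence11} (when $d\equiv 1\pmod 4$). Set $P=(A+A)^d\subseteq\mathbb{F}_q^d$ and, for each pair $(c,a)\in A^d\times A^d$, consider the sphere $S_{c,||a||}$ of radius $||a||\in dA^2$ centred at $c$. Since $c+a\in (A+A)^d$ lies on $S_{c,||a||}$ and the map $(c,a)\mapsto (c+a,S_{c,||a||})$ is injective, these produce a family $S$ with $|S|\le|A|^d\,|dA^2|$ and at least $|A|^{2d}$ incidences $I(P,S)$, while $|P|\le|A+A|^d$.

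Because Theorems~\ref{new-incidence} and~\ref{new-incidence11} only cover square (resp.\ non-square) radii, I would first partition $A^d$ into three classes according to whether $||a||$ is a nonzero square, a nonzero non-square, or zero, and, by pigeonhole, retain the class matching the residue $d\bmod 4$; this gives a sub-family $S'$ of spheres with $\gg|A|^{2d}$ incidences and $|S'|\le|A|^d\,|dA^2|$. When $|S'|\le|P|$, I apply the relevant incidence theorem with $N=|A+A|^d$, obtaining
\[|A|^{2d}\ll\frac{|A+A|^{2d}}{q}+q^{\frac{d-1}{2}}|A+A|^d,\]
so that $|A+A|\gg q^{1/(2d)}|A|$ or $|A+A|\gg|A|^2\,q^{-(d-1)/(2d)}$; combined with $|A|\gg q^{1/2}$, these yield $|A+A|\gg q^{(d+1)/(2d)}$ and $|A+A|\gg|A|^{(d+1)/d}$ respectively, so conclusion~(1) holds.

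When instead $|S'|>|P|$, I would randomly thin $S'$ down to a sub-family of size $\sim|A+A|^d$, which in expectation retains a $|A+A|^d/|S'|$ fraction of the incidences. Applying the incidence theorem with $N=|A+A|^d$ and dividing by $|A+A|^d$ leaves
\[\frac{|A|^d}{|dA^2|}\ll\frac{|A+A|^d}{q}+q^{\frac{d-1}{2}}.\]
If the second term dominates, conclusion~(2), $|dA^2|\gg|A|^d/q^{(d-1)/2}$, follows directly. Otherwise $|A+A|^d\gg q|A|^d/|dA^2|$; substituting the failure of~(2), i.e.\ $|dA^2|\ll|A|^d/q^{(d-1)/2}$, forces $|A+A|^d\gg q^{(d+1)/2}$, which is the first branch of conclusion~(1).

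The main obstacle I anticipate is the parity pigeonhole step. If, for example, $d\equiv 3\pmod 4$ but the class dominating $A^d$ consists of vectors with $||a||$ non-square or zero, then the dominant sub-family of spheres falls outside the scope of Theorem~\ref{new-incidence}, and Theorem~\ref{new-incidence11} is not available either (it requires $d\equiv 1\pmod 4$). Resolving this likely needs a companion incidence estimate for the complementary radius type, or a structural argument showing that such wrong-parity dominance is impossible under $|A|\gg q^{1/2}$, for instance via a character-sum bound for $|\{a\in A^d:||a||\in R\}|$ across quadratic residue classes $R$.
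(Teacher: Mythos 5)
Your overall strategy --- encode the problem as point--sphere incidences with $P=(A+A)^d$ and spheres centred at points of $A^d$ with radii in $dA^2$, split by the quadratic-residue class of the radius, normalise so that the point set and sphere set have comparable size, and then invoke Theorem \ref{new-incidence} or Theorem \ref{new-incidence11} --- is exactly the paper's, and your concluding case analysis is sound (your random thinning plays the same role as the paper's deterministic partition of the radius set into $m=|dA^2||A|^d/|A+A|^d$ blocks of size $|A+A|^d/|A|^d$, and yields the same inequality). The one genuine gap is the one you flagged yourself: pigeonholing $A^d$ over the three residue classes of $||a||$ only guarantees that \emph{some} class contains $\gg |A|^d$ tuples, and nothing forces that class to be the one whose radius type matches $d\bmod 4$. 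As written, the argument can fail, for instance when $d\equiv 3\pmod 4$ but almost every $a\in A^d$ has non-square norm, in which case neither incidence theorem applies to the dominant subfamily.

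The paper closes precisely this gap with Lemma \ref{square}: for $|A|\gg q^{1/2}$, at least $\gg |A|^d$ tuples of $A^d$ have square norm (and the symmetric statement for non-square norms handles $d\equiv 1\pmod 4$). Its proof is the ``structural argument via a character-sum bound'' that you anticipated: one counts solutions of $xy=a+b$ with $x,y$ nonzero squares, $a\in A^2$, $b\in (d-1)A^2$, using the character-sum estimate of Lemma 2.1 in \cite{VA}; the main term $\sim |A|^d(q-1)^2/(4q)$ dominates the error term provided the additive energy satisfies $E^+((d-1)A^2,(d-1)A^2)\ll |A|^{2d-1}/q$, which follows from the trivial bound $E^+\ll |A|^{2d-3}$ together with the hypothesis $|A|\gg q^{1/2}$. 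Dividing the resulting count of solutions by the $(q-1)/2$ representations of each square as $xy$ gives $\gg |A|^d$ square-sum-type tuples. Once you insert this lemma (and its non-square analogue), your proof goes through; without it, the assertion that the retained residue class carries $\gg |A|^{2d}$ incidences is unjustified.
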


\begin{corollary}\label{batap}
Let $A$ be a set in $\mathbb{F}_q$ with $q\equiv 3\mod 4$ and $q^{1/2}\ll |A|\ll q^{\frac{d^2+1}{2d^2}}$. For $d\ge 3$ odd, we have
\[\max \left\lbrace |A+A|,~ |dA^2|\right\rbrace \gg \frac{|A|^d}{q^{\frac{d-1}{2}}}.\]
In particular, for $d=3$, one has 
\[\max \{|A+A|, |A^2+A^2+A^2|\}\gg \frac{|A|^3}{q}.\]
\end{corollary}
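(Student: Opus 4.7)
The plan is to derive Corollary~\ref{batap} directly from Theorem~\ref{sum-product}; the main task reduces to checking that the hypothesis $|A|\ll q^{(d^2+1)/(2d^2)}$ calibrates the two alternatives of that theorem correctly. Since $q\equiv 3\pmod 4$ and $|A|\gg q^{1/2}$, Theorem~\ref{sum-product} applies and yields one of two alternatives. If alternative $(2)$ holds, namely $|dA^2|\gg |A|^d/q^{(d-1)/2}$, the conclusion is immediate.

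Otherwise alternative $(1)$ holds and gives $|A+A|\geq \min\{q^{(d+1)/(2d)},\,|A|^{(d+1)/d}\}$, in which case I would verify that both terms inside the minimum exceed the target $|A|^d/q^{(d-1)/2}$. The first inequality, $q^{(d+1)/(2d)}\gg |A|^d/q^{(d-1)/2}$, is equivalent, after combining exponents of $q$, to $|A|\ll q^{(d^2+1)/(2d^2)}$, which is precisely the hypothesis; this is exactly why the corollary's upper bound on $|A|$ is tuned the way it is. The second inequality, $|A|^{(d+1)/d}\gg |A|^d/q^{(d-1)/2}$, rearranges to $|A|\ll q^{d(d-1)/(2(d^2-d-1))}$, and a direct comparison of exponents (reducing to $(d^2+1)(d^2-d-1)\leq d^3(d-1)$, which simplifies to $d+1\geq 0$) shows that this follows from the same hypothesis for every odd $d\geq 3$.

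The main --- indeed the only --- obstacle is this exponent bookkeeping; no additional analytic content is required beyond Theorem~\ref{sum-product} itself. For the $d=3$ specialization one simply substitutes $dA^2 = A^2+A^2+A^2$ and $|A|^d/q^{(d-1)/2} = |A|^3/q$ into the resulting bound.
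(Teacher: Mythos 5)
Your derivation is correct and is exactly the intended one: the paper states Corollary~\ref{batap} as an immediate consequence of Theorem~\ref{sum-product} without writing out the exponent comparison, and your bookkeeping (that $|A|\ll q^{(d^2+1)/(2d^2)}$ makes $q^{(d+1)/(2d)}\gg |A|^d q^{-(d-1)/2}$, and that the same hypothesis implies $|A|\ll q^{d(d-1)/(2(d^2-d-1))}$ so the other branch of the minimum also dominates) checks out. No gaps.
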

The lower bound $|A|^dq^{-\frac{d-1}{2}}$ improves earlier results in the literature, for instance, $|A|^{\frac{3d-5}{d-1}}q^{\frac{2-d}{d-1}}$ in \cite{hiep}. We refer the reader to \cite{hiep} for discussions on this sum-product type problem, and to \cite{M, Rudnev} and references therein for results on other types.

The rest of this paper is organized as follows. In the next section, we recall some notations from  discrete Fourier analysis, and proofs of Theorems \ref{eigenvalue} and \ref{eigenvalue198} are given in Sections \ref{sec3} and \ref{sec4}, respectively. In Section \ref{sec5}, we provide proofs of Theorems \ref{new-incidence} and \ref{new-incidence11}. In Section \ref{sec6}, a proof of Theorem \ref{sum-product} is presented. In the last section, we address some open questions.


\section{Preliminaries}
We first recall some notations and lemmas from discrete Fourier analysis. Let $f$ be a complex valued function on $\mathbb F_q^k.$ The Fourier transform $\widehat{f}$ of $f$ is defined by
$$ \widehat{f}(m):=q^{-k} \sum_{x\in \mathbb F_q^k} \chi(-m\cdot x) f(x),$$ 
where $\chi$ denotes the principal additive character of $\mathbb F_q.$ The Fourier inversion theorem states that
$$ f(x)=\sum_{m\in \mathbb F_q^k} \chi(m\cdot x) \widehat{f}(m).$$ 
The orthogonality of the additive character $\chi$ says that
$$ \sum_{\alpha\in \mathbb F_q^k} \chi(\beta\cdot \alpha)
=\left\{\begin{array}{ll} 0\quad &\mbox{if}\quad \beta\ne (0,\ldots, 0),\\
q^k\quad &\mbox{if}\quad \beta=(0,\ldots,0). \end{array}\right.$$

As a direct application of the orthogonality of $\chi$, we obtain
$$ \sum_{m\in \mathbb F_q^k} |\widehat{f}(m)|^2 =q^{-k}\sum_{x\in \mathbb F_q^k} |f(x)|^2,$$
which is known as the Plancherel theorem. 

For example, it follows from the Plancherel theorem  that for any set $E$ in $\mathbb F_q^k,$ 
$$ \sum_{m\in \mathbb F_q^k} |\widehat{E}(m)|^2= q^{-k}|E|.$$
Here, and throughout this note,  we identify a set $E$ with the indicator function $1_E$ on $E.$

Throughout this paper,  let $\eta$ be the quadratic character of $\mathbb F_q$, namely, for $s\ne 0$, $\eta(s)=1$ if $s$ is a square, and $\eta(s)=-1$ if $s$ is a non-square. We also use the convention that $\eta(0)=0.$ 

For $a\in \mathbb F_q^*,$ the Gauss sum ${\mathcal G_a}$ is defined by
\begin{equation}\label{GaussDef}
{\mathcal G_a}:=\sum_{s\in \mathbb F_q^*}\eta(s) \chi(as),\end{equation}  which can be written as
$$ {\mathcal G_a}=\sum_{s\in \mathbb F_q} \chi(as^2)=\eta(a) {\mathcal G_1}.$$
The absolute value of the Gauss sum ${\mathcal G_a}$ is exactly $q^{1/2}.$ Moreover, the explicit form of the Gauss sum ${\mathcal G_1}$ is provided in the next lemma. 
\begin{lemma}[\cite{LN97}, Theorem 5.15]\label{ExplicitGauss}
Let $\mathbb F_q$ be a finite field with $ q= p^{\ell},$ where $p$ is an odd prime and $\ell \in {\mathbb N}.$
Then we have
$${\mathcal G_1}= \left\{\begin{array}{ll}  {(-1)}^{\ell-1} q^{\frac{1}{2}} \quad &\mbox{if} \quad p \equiv 1 \mod 4 \\
{(-1)}^{\ell-1} i^\ell q^{\frac{1}{2}} \quad &\mbox{if} \quad p\equiv 3 \mod 4.\end{array}\right. $$
\end{lemma}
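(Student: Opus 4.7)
The plan is to attack the statement in three stages: (i) determine $\mathcal{G}_1^2$ and hence the modulus and the ``real vs. imaginary'' dichotomy; (ii) settle the sign in the prime field case $\ell = 1$; and (iii) pass from $\mathbb{F}_p$ to $\mathbb{F}_{p^\ell}$ via the Davenport--Hasse lifting relation.

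For stage (i), I would open up the square using the representation $\mathcal{G}_1 = \sum_{s\in \mathbb{F}_q^*}\eta(s)\chi(s)$, substitute $s \mapsto tu$ for $t \neq 0$, and exploit $\eta(tu)\eta(t) = \eta(u)$ together with orthogonality of $\chi$:
$$\mathcal{G}_1^2 = \sum_{s,t \ne 0} \eta(s)\eta(t)\chi(s+t) = \sum_{u} \eta(u) \sum_{t\ne 0} \chi(t(u+1)) = q\,\eta(-1),$$
since $\sum_u \eta(u) = 0$ kills every term except $u = -1$. Because $\eta(-1) = 1$ iff $q \equiv 1 \pmod 4$, this already forces $\mathcal{G}_1 \in \{\pm q^{1/2}\}$ when $p \equiv 1 \pmod 4$ (and $\ell$ even, etc.), and $\mathcal{G}_1 \in \{\pm i q^{1/2}\}$ in the complementary cases. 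What remains is to pin down the correct sign and, in the ``$i$'' case, which of $\pm i$ is correct.

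Stage (ii) is the main obstacle, the classical Gauss sign problem for $\sum_{s=0}^{p-1} e^{2\pi i s^2/p}$. My preferred route would be Schur's evaluation via the eigenvalues of the discrete Fourier transform matrix $F = (p^{-1/2}\zeta^{jk})_{0 \le j,k \le p-1}$ with $\zeta = e^{2\pi i/p}$: $F^2$ is a permutation matrix so $F^4 = I$, hence the eigenvalues of $F$ are $4$th roots of unity, and the trace of $F$ equals $p^{-1/2}\mathcal{G}_1$. Counting the multiplicities of $1, i, -1, -i$ as eigenvalues (using that $\det F$ and $\text{tr}\,F^2$ can be computed directly) then determines $\mathcal{G}_1$ on the nose. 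An alternative I would keep in reserve is Dirichlet's theta-function argument, which identifies the sum with a limit of $\theta(it)$ and reads off the sign from the functional equation of $\theta$. Either route yields $\mathcal{G}_1 = \sqrt{p}$ for $p \equiv 1 \pmod 4$ and $\mathcal{G}_1 = i\sqrt{p}$ for $p \equiv 3 \pmod 4$.

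For stage (iii), I would invoke the Davenport--Hasse relation, which for the quadratic character and the canonical additive character reads
$$\mathcal{G}_1^{(q)} = (-1)^{\ell - 1}\bigl(\mathcal{G}_1^{(p)}\bigr)^{\ell}.$$
Combining this with the outcome of stage (ii) yields, in the $p \equiv 1 \pmod 4$ branch, $(-1)^{\ell-1}p^{\ell/2} = (-1)^{\ell-1}q^{1/2}$, and in the $p \equiv 3 \pmod 4$ branch, $(-1)^{\ell-1}(i\sqrt{p})^{\ell} = (-1)^{\ell-1} i^{\ell} q^{1/2}$, matching the statement exactly. The only delicate point beyond stage (ii) is a careful proof of Davenport--Hasse itself, which I would obtain in the standard way by comparing the $L$-function $\prod_\alpha(1 - \psi(\alpha)\chi(\alpha) T^{\deg \alpha})^{-1}$ attached to the lifted characters to its factorization over $\mathbb{F}_p$; everything else is bookkeeping.
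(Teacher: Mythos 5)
The paper offers no proof of this lemma: it is quoted verbatim as Theorem 5.15 of Lidl--Niederreiter \cite{LN97}, so there is no internal argument to compare against. Your outline is a correct and essentially standard proof of that classical result. Stage (i) is right as computed ($\mathcal{G}_1^2=\eta(-1)q$ by the substitution $s=tu$ and orthogonality), stage (iii) correctly applies Davenport--Hasse (noting that the quadratic character of $\mathbb{F}_{p^\ell}$ is the norm-lift of that of $\mathbb{F}_p$ and the canonical additive character is the trace-lift, so the relation $\mathcal{G}_1^{(q)}=(-1)^{\ell-1}\bigl(\mathcal{G}_1^{(p)}\bigr)^{\ell}$ applies), and the two branches recombine to exactly the stated formula. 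The one genuinely hard step, the sign of $\sum_{s}e^{2\pi i s^2/p}$, you correctly isolate and defer to Schur's eigenvalue count for the DFT matrix (or Dirichlet's theta argument); this is where all the work lives, and your outline does not carry it out, but the route is sound and classical. For comparison, the cited source follows the same two-step reduction (prime-field evaluation plus Davenport--Hasse lifting) but settles the prime-field sign via Gauss's product formula $\prod_{j}(\zeta^{2j-1}-\zeta^{-(2j-1)})$ rather than the spectral argument; the spectral route is arguably cleaner conceptually, while Gauss's is more self-contained within finite-field algebra.
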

Notice that $q=p^l\equiv 3\mod 4$ if and only if $p\equiv 3\mod 4$ and $l$ is an odd positive integer.

The following formula will be used in our proof of Theorem \ref{eigenvalue}. For $a\in \mathbb F_q^*$ and $b\in \mathbb F_q,$ 
\begin{equation}\label{ComSqu}  
 \sum_{s\in \mathbb F_q} \chi(as^2+bs)= \eta(a){\mathcal G_1} \chi\left(\frac{b^2}{-4a}\right).\end{equation}
This can be proved easily by completing the square and using a change of variables.

\section{Proof of Theorem \ref{eigenvalue}} \label{sec3}
In this section, we provide a proof of Theorem \ref{eigenvalue} and also for other dimensions. 
\begin{theorem}\label{eigenvalue1}
Let $\{\lambda_{m}\}_{m\in \mathbb{F}_q^k}$ be the eigenvalues of $G_{Q, k}$. 
\begin{enumerate}
\item
If $k=4n$ for some $n\in \mathbb N,$ and $q\equiv 3 \mod{4},$ then we have
 $$ \lambda_m=q^k\cdot \left\{ \begin{array}{ll} q^{-1}\delta_0(m)- q^{-\frac{k}{2}}+q^{-\frac{(k+2)}{2}}  \quad &\mbox{if}~~ m\in C_k\\
 q^{-\frac{(k+2)}{2}}   \quad &\mbox{if} ~~ m\notin C_k. \end{array}\right.$$
\item If $k=4n$ for some  $n\in \mathbb N$ and $q\equiv 1 \mod{4},$ or $k=4n+2$ for some $n\in \mathbb N$, then we have
$$ \lambda_m=q^k\cdot \left\{ \begin{array}{ll} q^{-1}\delta_0(m)+ q^{-\frac{k}{2}}-q^{-\frac{(k+2)}{2}}  \quad &\mbox{if}~~ m\in C_k\\
 -q^{-\frac{(k+2)}{2}}   \quad &\mbox{if} ~~ m\notin C_k. \end{array}\right.$$
\item
If $k\ge 3$ is odd, then we have
$$ \lambda_m= q^{k-1}\delta_{0}(m) + q^{-1} \eta(Q(m)) {\mathcal G_1^{k+1}},$$
where $\mathcal{G}_1$ is the Gauss sum defined in \eqref{GaussDef}, $\eta$ is the quadratic character of $\mathbb{F}_q^*$, and we use the convention that $\eta(0)=0.$ 
 \end{enumerate}
\end{theorem}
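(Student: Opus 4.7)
}

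Since $G_{Q,k}$ is a Cayley graph on the abelian group $\mathbb{F}_q^k$ with (symmetric) connection set $C_k$, its eigenvalues are indexed by the additive characters of $\mathbb{F}_q^k$ and are given by
\[
\lambda_m \;=\; \sum_{x\in C_k} \chi(m\cdot x), \qquad m\in \mathbb{F}_q^k.
\]
The plan is to evaluate this exponential sum in closed form by Fourier inversion and the Gauss-sum identity \eqref{ComSqu}.

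First I would replace the indicator of $C_k$ by the orthogonality relation
\[
\mathbf{1}[Q(x)=0] \;=\; q^{-1}\sum_{t\in \mathbb{F}_q} \chi(tQ(x)),
\]
so that
\[
\lambda_m \;=\; q^{-1}\sum_{t\in \mathbb{F}_q}\sum_{x\in \mathbb{F}_q^k}\chi\bigl(tQ(x)+m\cdot x\bigr).
\]
The $t=0$ contribution is $q^{k-1}\delta_0(m)$ by the orthogonality of $\chi$. For $t\ne 0$, the inner sum factors over coordinates, and completing the square via \eqref{ComSqu} in each variable (with coefficient $-t$ on $x_1^2$ and $+t$ on $x_2^2,\dots,x_k^2$) gives
\[
\sum_{x\in \mathbb{F}_q^k}\chi\bigl(tQ(x)+m\cdot x\bigr)
\;=\; \eta(-t)\,\eta(t)^{k-1}\,\mathcal{G}_1^{\,k}\,\chi\!\left(\tfrac{-Q(m)}{4t}\right)
\;=\; \eta(-1)\,\eta(t)^{k}\,\mathcal{G}_1^{\,k}\,\chi\!\left(\tfrac{-Q(m)}{4t}\right).
\]

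Now I would split on the parity of $k$. When $k$ is even, $\eta(t)^k=1$ for $t\ne 0$, and summing over $t\in \mathbb{F}_q^*$ yields $(q-1)$ when $Q(m)=0$ (trivial phase) and $-1$ when $Q(m)\ne 0$ (after the substitution $s=-Q(m)/(4t)$, which bijects $\mathbb{F}_q^*$ with itself). When $k$ is odd, $\eta(t)^k=\eta(t)$, the sum over $t\ne 0$ vanishes when $Q(m)=0$, and the substitution $s=-Q(m)/(4t)$ gives $\eta(-Q(m))\,\mathcal{G}_1$ when $Q(m)\ne 0$. Collecting these and dividing by $q$ gives case (3) directly, while in the even cases I still need to evaluate $\eta(-1)\mathcal{G}_1^{\,k}$.

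For the last step I would use the identity $\mathcal{G}_1^{\,2}=\eta(-1)q$ together with Lemma \ref{ExplicitGauss}, which gives $\eta(-1)=-1$ when $q\equiv 3\pmod 4$ and $\eta(-1)=1$ when $q\equiv 1\pmod 4$. Thus $\mathcal{G}_1^{\,k}=(\eta(-1))^{k/2}q^{k/2}$ for even $k$, and a short parity check in each of the three subcases ($k\equiv 0\pmod 4$ with $q\equiv 3\pmod 4$; $k\equiv 0\pmod 4$ with $q\equiv 1\pmod 4$; $k\equiv 2\pmod 4$) gives the stated sign, completing all cases. The main obstacle is purely bookkeeping: tracking the interaction of $\eta(-1)$, the parity of $k/2$, and the residue of $q \bmod 4$ so that the signs of the leading terms $\pm q^{k/2}$ come out correctly; no genuinely new idea is needed beyond the Gauss-sum computation above.
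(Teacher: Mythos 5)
Your proposal is correct and follows essentially the same route as the paper: detect $Q(x)=0$ by character orthogonality, complete the square coordinatewise via \eqref{ComSqu} to reduce to the sum $\eta(-1)\mathcal{G}_1^k\sum_{t\ne 0}\eta(t)^k\chi(-Q(m)/(4t))$, and then evaluate $\eta(-1)\mathcal{G}_1^k=\pm q^{k/2}$ (respectively the odd-$k$ quadratic character sum) case by case using Lemma \ref{ExplicitGauss}. The paper packages this as a computation of $\widehat{C_k}(m)$ in Lemma \ref{FTForm}, but the content and the sign bookkeeping are identical to yours.
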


As we observed in the introduction, when $k=4n$ and $q\equiv 3\mod 4$,  the unique positive and non-trivial eigenvalue of this graph is much smaller than the absolute value of others. This does not hold for other dimensions or $q\equiv 1\mod 4$.

Since $G_{Q, k}$ is a Cayley graph, it is well-known in the literature that its eigenvalues can be expressed in the form $q^k\cdot \widehat{C_k}(m)$. It is sufficient to prove the following lemma. 
\begin{lemma}\label{FTForm} For any $m\in \mathbb F_q^k,$ we have
\begin{equation}\label{FFC}\widehat{C_k}(m)=q^{-1}\delta_{0}(m) +q^{-k-1} \eta(-1) {\mathcal G_1^k} \sum_{s\ne 0} \eta^k(s) \chi\left( \frac{Q(m)}{-4s}\right),\end{equation}
where $\delta_0(m)=1$ if and only if $m=(0, \ldots, 0)$. 
In particular, we have the followings:
\begin{enumerate}
\item
If $k=4n$ for some $n\in \mathbb N,$ and $q\equiv 3 \mod{4},$ then we have
 $$ \widehat{C_k}(m)=\left\{ \begin{array}{ll} q^{-1}\delta_0(m)- q^{-\frac{k}{2}}+q^{-\frac{(k+2)}{2}}  \quad &\mbox{if}~~ m\in C_k\\
 q^{-\frac{(k+2)}{2}}   \quad &\mbox{if} ~~ m\notin C_k. \end{array}\right.$$
\item If $k=4n$ for some  $n\in \mathbb N$ and $q\equiv 1 \mod{4},$ or $k=4n+2$ for some $n\in \mathbb N$, then we have
$$ \widehat{C_k}(m)=\left\{ \begin{array}{ll} q^{-1}\delta_0(m)+ q^{-\frac{k}{2}}-q^{-\frac{(k+2)}{2}}  \quad &\mbox{if}~~ m\in C_k\\
 -q^{-\frac{(k+2)}{2}}   \quad &\mbox{if} ~~ m\notin C_k. \end{array}\right.$$
\item
If $k\ge 3$ is odd, then we have
$$ \widehat{C_k}(m)= q^{-1}\delta_{0}(m) + q^{-k-1} \eta(Q(m)) {\mathcal G_1^{k+1}},$$
where we use the convention that $\eta(0)=0.$
 \end{enumerate}
 \end{lemma}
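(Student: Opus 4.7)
The plan is to compute $\widehat{C_k}(m)$ directly by expanding the indicator of $C_k$ as a Fourier sum and evaluating the resulting Gaussian integrals coordinate by coordinate. Specifically, I would write $1_{C_k}(x) = q^{-1}\sum_{s\in\mathbb F_q}\chi(sQ(x))$, so that
\[
\widehat{C_k}(m)=q^{-k-1}\sum_{s\in\mathbb F_q}\sum_{x\in\mathbb F_q^k}\chi(sQ(x)-m\cdot x).
\]
The $s=0$ contribution, by the orthogonality of $\chi$, is exactly $q^{-1}\delta_0(m)$. For $s\ne 0$, the inner sum factors as $\prod_{i=1}^k \sum_{x_i}\chi(s\epsilon_i x_i^2-m_ix_i)$ where $\epsilon_1=-1$ and $\epsilon_i=1$ for $i\ge 2$.

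Next I would apply the completed-square identity \eqref{ComSqu} to each one-variable sum, yielding $\eta(s\epsilon_i)\mathcal G_1\,\chi\!\left(\frac{m_i^2}{-4s\epsilon_i}\right)$. Taking the product over $i$, the character factors combine to $\chi\!\left(\frac{Q(m)}{-4s}\right)$ because $\sum_i m_i^2/\epsilon_i=Q(m)$, and the quadratic characters combine to $\eta^k(s)\prod_i\eta(\epsilon_i)=\eta^k(s)\eta(-1)$. This delivers the master formula \eqref{FFC} after pulling $\eta(-1)\mathcal G_1^k$ out of the $s$-sum.

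From \eqref{FFC} the three itemized cases reduce to evaluating $\sum_{s\ne 0}\eta^k(s)\chi(Q(m)/(-4s))$ and simplifying the prefactor $\eta(-1)\mathcal G_1^k$. When $k$ is odd, $\eta^k=\eta$, and after the substitution $t=1/s$ the inner sum becomes a Gauss sum of the form $\sum_{t\ne 0}\eta(t)\chi(-Q(m)t/4)=\eta(-Q(m)/4)\mathcal G_1=\eta(-1)\eta(Q(m))\mathcal G_1$ (with the convention $\eta(0)=0$); combined with the prefactor this gives $q^{-k-1}\eta(Q(m))\mathcal G_1^{k+1}$, proving item (3). When $k$ is even, $\eta^k\equiv 1$ on $\mathbb F_q^*$, so the $s$-sum is $q-1$ if $Q(m)=0$ and $-1$ if $Q(m)\ne 0$ (again by orthogonality after a change of variables). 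It then remains to insert $\eta(-1)\mathcal G_1^k=(\eta(-1))^{k/2+1}q^{k/2}$ via $\mathcal G_1^2=\eta(-1)q$ and Lemma \ref{ExplicitGauss}, which evaluates to $-q^{k/2}$ in case (1) and $+q^{k/2}$ in case (2); a short bookkeeping of the $\delta_0$ term, the $Q(m)=0$ term and the $Q(m)\ne 0$ term then reproduces the stated piecewise formulas.

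The main obstacle is nothing conceptual but rather the careful sign bookkeeping in the even-$k$ cases: one has to track both $\eta(-1)$ (which flips with $q\bmod 4$) and the parity of $k/2$ when simplifying $\mathcal G_1^k$, and ensure the final signs in front of $q^{-k/2}$ and $q^{-(k+2)/2}$ match the claim. The deduction of Theorem \ref{eigenvalue1} from the lemma is then immediate since $G_{Q,k}$ is the Cayley graph of the abelian group $(\mathbb F_q^k,+)$ with connection set $C_k$, so its eigenvalues are exactly $q^k\widehat{C_k}(m)$ for $m\in\mathbb F_q^k$; Theorem \ref{eigenvalue} is the subcase $k=4n$, $q\equiv 3\pmod 4$.
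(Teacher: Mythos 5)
Your proposal is correct and follows essentially the same route as the paper: expand $1_{C_k}$ via additive characters to split off the $q^{-1}\delta_0(m)$ term, factor the $x$-sum coordinatewise, apply the completed-square identity \eqref{ComSqu} to obtain \eqref{FFC}, and then evaluate the $s$-sum and the prefactor $\eta(-1)\mathcal G_1^k$ case by case (your uniform use of $\mathcal G_1^2=\eta(-1)q$ matches the paper's computation up to trivial reorganization). The sign bookkeeping you describe checks out in all three cases.
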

 
We note that our proof of Lemma \ref{FTForm} is quite similar to that of  \cite[Proposition 2.4]{KLP} with $Q$ is defined by $Q(x)=-x_1\cdot x_2+x_3^2+\cdots+x_k^2$.
 \begin{proof}
 By the definition and the orthogonality of $\chi,$  we have
\begin{align*}
 \widehat{C_k}(m)&= q^{-k}\sum_{x\in C_k} \chi(-x\cdot m)\\
&= q^{-1} \delta_0(m)+ q^{-k-1}\sum_{x\in {\mathbb F_q^k}}\sum_{s\neq 0} \chi\left( s(-x_1^2+x_2^2+\cdots+x_k^2)\right)~\chi(-x\cdot m)\\
&=q^{-1} \delta_0(m)+ q^{-k-1} \sum_{s\neq 0}\sum_{x_1\in \mathbb F_q} \chi(-sx_1^2-m_1x_1) \prod_{j=2}^{k} \sum_{x_j\in {\mathbb F_q}} \chi(s x_j^2-m_jx_j) .
\end{align*}
By the complete square formula \eqref{ComSqu}, we obtain \eqref{FFC} which states that
$$\widehat{C_k}(m)=q^{-1}\delta_{0}(m) +q^{-k-1} \eta(-1) {\mathcal G_1^k} \sum_{s\ne 0} \eta^k(s) \chi\left( \frac{Q(m)}{-4s}\right).$$
We now fall into three cases.

{\bf Case $1$:} Suppose that $k=4n$ for some $n\in \mathbb N,$ and $q\equiv 3 \mod{4}.$
Then $\eta^k\equiv 1$ and $\eta(-1)=-1.$ One can use Lemma \ref{ExplicitGauss} to see that  ${\mathcal G_1^k}=q^{k/2}$ for $k\equiv 0 \mod{4}.$ So, $\eta(-1) {\mathcal G_1^k}=-q^{k/2}$. This implies
$$\widehat{C_k}(m)=q^{-1}\delta_{0}(m) -q^{-k-1}q^{k/2} \sum_{s\ne 0}  \chi\left( \frac{Q(m)}{-4s}\right).$$
Thus, the first part of the lemma follows by the orthogonality of $\chi.$

{\bf Case $2$:} 
Assume that  $k=4n$ for some  $n\in \mathbb N$ and $q\equiv 1 \mod{4}$, or $k=4n+2$ for some $n\in \mathbb N.$  Using the same argument as in the previous case, it suffices to show that 
\[\eta(-1) {\mathcal G_1^k}=q^{k/2}.\]
We first assume that $k\equiv 0 \mod{4}$ and $q\equiv 1 \mod{4},$  then $-1$ is a square number, i.e.,  $\eta(-1)=1,$ and ${\mathcal G_1^k}=q^{k/2}$ by Lemma \ref{ExplicitGauss}. Hence, we get  $\eta(-1){\mathcal G_1^k}= q^{\frac{k}{2}},$ as required. 

If $k\equiv 2 \mod{4}$, then $k-2\equiv 0\mod{4}$,  so 
${\mathcal G_1^{k-2}}=q^{(k-2)/2}.$ One can use Lemma \ref{ExplicitGauss} again to obtain that  ${\mathcal G_1^2}=\eta(-1)q.$  Hence, ${\mathcal G_1^k}= \eta(-1) q {\mathcal G_1^{k-2}}=\eta(-1) q^{k/2}.$ In other words, we have proved that  $\eta(-1){\mathcal G_1^k} = q^{k/2}$.

{\bf Case $3$:} Suppose that $k\ge 3$ is an odd integer. Since $\eta^k=\eta$, it follows 
$$\widehat{C_k}(m)=q^{-1}\delta_{0}(m) +q^{-k-1} \eta(-1) {\mathcal G_1^k} \sum_{s\ne 0} \eta(s) \chi\left( \frac{Q(m)}{-4s}\right).$$
If $Q(m)=0$, then we are done by  the orthogonality of $\eta$. On the other hand, if $Q(m)\ne 0,$ then the above summation over $s\ne 0$ is the same as the quantity $\eta(-1) \eta(Q(m)) {\mathcal G_1},$ which follows by  a change of variables by letting $t=\frac{Q(m)}{-4s}.$  This completes the proof of the third part.
 \end{proof} 
Since eigenvalues of $G_{Q, k}$ are $q^k\cdot \widehat{C_k}(m)$ with $m\in \mathbb{F}_q^k$, Theorem \ref{eigenvalue1} follows directly from Lemma \ref{FTForm}.

\section{Proof of Theorem \ref{eigenvalue198}} \label{sec4}
Eigenvalues of $G_{||\cdot ||, k}$ are of the form $q^k\cdot \widehat{S_0^{k-1}}(m)$. Thus, Theorem \ref{eigenvalue198} follows directly from \cite[Lemma 2.2]{iosevichcanada}. For the reader convenience, we recall it here. 
\begin{lemma}
Assume $k=4n+2$ for some $n\in \mathbb{N}$, and $q\equiv 3\mod 4$, then we have 
\[\widehat{S_0^{k-1}}(m)=q^{-1}\delta_0(m)-q^{-\frac{k+2}{2}}\sum_{r\ne 0}\chi(r||m||).\]
\end{lemma}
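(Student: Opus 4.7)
My plan is to mirror the proof of Lemma~\ref{FTForm}, with the Euclidean form $||x||=x_1^2+\cdots+x_k^2$ playing the role of the cone form $Q$. The argument decouples completely across coordinates, so the only real work lies in evaluating the resulting Gauss-sum factor under the specific parity and residue conditions.

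Starting from the definition and inserting the orthogonality identity $q^{-1}\sum_{s\in\mathbb F_q}\chi(s||x||)$ for the indicator of $||x||=0$, I would obtain
\[
\widehat{S_0^{k-1}}(m)=q^{-k-1}\sum_{s\in\mathbb F_q}\sum_{x\in\mathbb F_q^k}\chi(s||x||-m\cdot x).
\]
The $s=0$ contribution produces the main term $q^{-1}\delta_0(m)$ via orthogonality of $\chi$. For $s\ne 0$, the additive separability of $||x||$ factors the $x$-sum as
\[
\prod_{j=1}^{k}\sum_{x_j\in\mathbb F_q}\chi(sx_j^2-m_jx_j),
\]
and applying the complete-the-square identity~\eqref{ComSqu} to each factor yields $\eta(s)\mathcal{G}_1\chi(-m_j^2/(4s))$, so that the product equals $\eta^k(s)\,\mathcal{G}_1^{k}\,\chi(-||m||/(4s))$.

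The hypotheses $k\equiv 2\pmod 4$ and $q\equiv 3\pmod 4$ now enter. Since $k$ is even, $\eta^k\equiv 1$ on $\mathbb F_q^*$. Iterating $\mathcal{G}_1^2=\eta(-1)q$ gives $\mathcal{G}_1^k=\eta(-1)^{k/2}q^{k/2}$; the exponent $k/2=2n+1$ is odd, and $q\equiv 3\pmod 4$ forces $\eta(-1)=-1$ by Lemma~\ref{ExplicitGauss}, so $\mathcal{G}_1^k=-q^{k/2}$. Substituting back produces
\[
\widehat{S_0^{k-1}}(m)=q^{-1}\delta_0(m)-q^{-(k+2)/2}\sum_{s\ne 0}\chi\!\left(\frac{-||m||}{4s}\right),
\]
and the invertible change of variables $r=-1/(4s)$ on $\mathbb F_q^*$ (legitimate since $q$ is odd) transforms the remaining sum into $\sum_{r\ne 0}\chi(r||m||)$, producing the claimed identity.

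The only genuine obstacle is bookkeeping the sign of $\mathcal{G}_1^k$: this is the single place where the parity of $k/2$ and the value of $\eta(-1)$ interact, and a different parity of $k/2$ or a different residue class of $q\pmod 4$ would flip the sign in front of the second term. Everything else is routine manipulation with Gauss sums and the additive character $\chi$, and one can read the structure off directly from the analogous computation in Lemma~\ref{FTForm}.
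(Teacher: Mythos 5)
Your computation is correct and is essentially the argument the paper relies on: the paper does not prove this lemma directly but cites \cite[Lemma 2.2]{iosevichcanada}, deriving it from the general formula of Lemma \ref{422}, which is obtained exactly by the orthogonality--complete-the-square--Gauss-sum evaluation you carry out (mirroring Lemma \ref{FTForm}). Your sign bookkeeping $\mathcal{G}_1^k=(\eta(-1)q)^{k/2}=-q^{k/2}$ for $k\equiv 2\pmod 4$, $q\equiv 3\pmod 4$, and the change of variables $r=-1/(4s)$, both check out.
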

This lemma was deduced from the following general statement, which can be found in \cite[Lemma 2.3]{iosevichcanada} or \cite[Lemma 4]{IKOK}.
\begin{lemma}\label{422}
For $m\in \mathbb{F}_q^k$, we have 
\[\widehat{S_0^{k-1}}=q^{-1}\delta_0(m)+q^{-k-1}\eta^{k}(-1)\mathcal{G}_1^k\sum_{r\ne 0}\eta^k(r)\chi\left(\frac{||m||}{4r}\right).\]
\end{lemma}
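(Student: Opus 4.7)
The plan is to compute $\widehat{S_0^{k-1}}(m)$ directly by unfolding the indicator of the zero norm via additive characters, factoring over coordinates, and applying the completing-the-square identity \eqref{ComSqu}. I will then clean up the sign inside the character by a single change of variables.

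First, by the orthogonality of $\chi$, I can write the indicator of $\{||x||=0\}$ as
\[
1_{S_0^{k-1}}(x)=q^{-1}\sum_{s\in \mathbb F_q}\chi(s\,||x||)=q^{-1}+q^{-1}\sum_{s\ne 0}\chi(s\,||x||).
\]
Plugging this into the definition $\widehat{S_0^{k-1}}(m)=q^{-k}\sum_{x\in \mathbb F_q^k} 1_{S_0^{k-1}}(x)\,\chi(-m\cdot x)$, the ``$q^{-1}$'' piece produces $q^{-1}\delta_0(m)$ by the orthogonality of $\chi$, while the ``$s\ne 0$'' piece becomes
\[
q^{-k-1}\sum_{s\ne 0}\prod_{j=1}^{k}\sum_{x_j\in \mathbb F_q}\chi(sx_j^2-m_j x_j),
\]
since the exponent splits as a sum $\sum_j (sx_j^2-m_j x_j)$ over coordinates.

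Next I apply the quadratic Gauss sum evaluation \eqref{ComSqu} with $a=s$ and $b=-m_j$ to each inner sum:
\[
\sum_{x_j\in \mathbb F_q}\chi(sx_j^2-m_j x_j)=\eta(s)\mathcal G_1\,\chi\!\left(\frac{m_j^{\,2}}{-4s}\right).
\]
Multiplying over $j=1,\dots,k$ collapses the exponents into $||m||/(-4s)$ and contributes a factor $\eta^k(s)\mathcal G_1^k$, so that
\[
\widehat{S_0^{k-1}}(m)=q^{-1}\delta_0(m)+q^{-k-1}\mathcal G_1^{\,k}\sum_{s\ne 0}\eta^k(s)\,\chi\!\left(\frac{||m||}{-4s}\right).
\]
Finally, I substitute $r=-s$, which turns $\eta^k(s)$ into $\eta^k(-1)\eta^k(r)$ and $||m||/(-4s)$ into $||m||/(4r)$. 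Pulling out the constant $\eta^k(-1)$ yields exactly
\[
\widehat{S_0^{k-1}}(m)=q^{-1}\delta_0(m)+q^{-k-1}\eta^k(-1)\,\mathcal G_1^{\,k}\sum_{r\ne 0}\eta^k(r)\,\chi\!\left(\frac{||m||}{4r}\right),
\]
which is the claimed identity.

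There is really no hard step here; the whole argument is a character-sum manipulation. The only point that needs a little care is the bookkeeping of signs: one must keep the sign $\eta(-1)$ arising from the change of variables $r=-s$ and not absorb it incorrectly into $\eta^k(r)$, since the eventual applications in Theorem \ref{eigenvalue198} crucially depend on the exact shape of the pre-factor $\eta^k(-1)\mathcal G_1^k$. Everything else is direct substitution.
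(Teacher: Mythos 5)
Your computation is correct, and it is exactly the standard argument: the paper does not prove Lemma \ref{422} itself (it cites \cite[Lemma 2.3]{iosevichcanada} and \cite[Lemma 4]{IKOK}), but your expansion of the indicator via orthogonality, coordinate-wise application of \eqref{ComSqu}, and the final substitution $r=-s$ mirror precisely the paper's own proof of the analogous cone formula in Lemma \ref{FTForm}. The sign bookkeeping ($-4s=4r$ and $\eta^k(s)=\eta^k(-1)\eta^k(r)$) checks out, so the stated prefactor $\eta^k(-1)\mathcal{G}_1^k$ is recovered correctly.
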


When $k=4n+2$ and $q\equiv 1 \mod 4$, or $k=4n$, one can compute from Lemma \ref{422} that 
 $$ \lambda_m=q^k\cdot \left\{ \begin{array}{ll} q^{-1}\delta_0(m)+q^{-\frac{k}{2}} -q^{-\frac{k+2}{2}} \quad &\mbox{if}~~ ||m||=0\\
 -q^{-\frac{k+2}{2}}   \quad &\mbox{if} ~~ ||m||\ne 0. \end{array}\right.$$

When $k$ is odd,  we have
$$ \lambda_m= q^{k-1} \delta_0(m) + q^{-1} \eta(-||m||) \mathcal{G}_1^{k+1}.$$


\section{Proofs of Theorems \ref{new-incidence} and \ref{new-incidence11}} \label{sec5}
To prove the incidence bounds, we make use of the following lemma, which can be easily proved by following the proof of the Expander mixing lemma \cite[Theorem 2.11]{alon3} and using the fact that the only positive non-trivial eigenvalue of $G_{Q, k}$ is $q^{\frac{k-2}{2}}$ when $k\equiv 0\mod 4$ and $q\equiv 3\mod 4$. The interested reader can also find a similar proof in  \cite[Lemma 2.6]{KPV}. 
\begin{lemma}\label{expander}
Suppose that $k\equiv 0\mod 4$ and $q\equiv 3\mod 4$. Let $Q(x)=-x_1^2+\sum_{i=2}^kx_i^2$. Let $W$ be a vertex set in $G_{Q, k}$ and $e(W, W)$ be the number of edges in $W$, then we have 
\[e(W, W)\le \frac{|W|^2}{q}+q^{\frac{k-2}{2}}|W|.\]
\end{lemma}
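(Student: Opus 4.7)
My plan is to prove this as a one-sided variant of the expander mixing lemma, exploiting the fact that Theorem \ref{eigenvalue} gives an extremely small bound on the largest \emph{positive} non-trivial eigenvalue, while only the negative eigenvalues are as large as $q^{k/2}$.

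First I would set $N = q^k$ and let $A$ be the adjacency matrix of $G_{Q,k}$, which is a real symmetric matrix because the connection set $C_k$ is symmetric under negation. Let $\{v_m\}_{m\in \mathbb F_q^k}$ be the orthonormal eigenbasis of additive characters, with eigenvalues $\lambda_m = q^k \widehat{C_k}(m)$ computed in Theorem \ref{eigenvalue}. Under the hypotheses $k\equiv 0 \bmod 4$ and $q\equiv 3 \bmod 4$, the trivial eigenvalue is
\[
\lambda_0 = |C_k| = q^{k-1} - q^{k/2} + q^{(k-2)/2} \le q^{k-1},
\]
while every nontrivial eigenvalue satisfies $\lambda_m \le q^{(k-2)/2}$ (the $m\in C_k \setminus \{0\}$ eigenvalues are negative, and the $m\notin C_k$ eigenvalues equal $q^{(k-2)/2}$ exactly). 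This uniform upper bound on nontrivial eigenvalues, not on their absolute values, is the whole point.

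Next I would expand $1_W = \sum_m c_m v_m$ in the eigenbasis and compute
\[
1_W^T A\, 1_W \;=\; \sum_m \lambda_m c_m^2 \;=\; \lambda_0 c_0^2 + \sum_{m\ne 0} \lambda_m c_m^2.
\]
Since $v_0$ is the normalised constant vector, $c_0^2 = |W|^2/N$, and Parseval gives $\sum_m c_m^2 = \|1_W\|^2 = |W|$. Using the one-sided bound $\lambda_m \le q^{(k-2)/2}$ for all $m\ne 0$,
\[
1_W^T A\, 1_W \;\le\; \frac{|C_k|}{q^k}|W|^2 + q^{(k-2)/2}\bigl(|W| - c_0^2\bigr) \;\le\; \frac{|W|^2}{q} + q^{(k-2)/2}|W|,
\]
where in the last step I used $|C_k|/q^k \le q^{-1}$.

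Finally, I would note that $1_W^T A\, 1_W$ equals $2e(W,W)$ (or $2e(W,W)$ plus a contribution from loops at vertices $v$ with $0\in C_k$, which only makes the bound stronger to state), so dividing by $2$ gives an even stronger inequality than claimed; the stated form follows immediately. The only subtle point is making sure one uses the one-sided estimate $\lambda_m \le q^{(k-2)/2}$ rather than $|\lambda_m|$, since the negative eigenvalues of size $\approx q^{k/2}$ would otherwise destroy the bound; this is precisely where the hypotheses $k\equiv 0 \bmod 4$ and $q\equiv 3\bmod 4$ are essential, as flagged in the statement.
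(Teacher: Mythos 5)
Your proposal is correct and follows essentially the same route the paper indicates: the one-sided expander mixing argument, using Theorem \ref{eigenvalue} to note that every non-trivial eigenvalue of $G_{Q,k}$ is at most $q^{\frac{k-2}{2}}$ (the large ones, of size about $q^{k/2}$, are all negative) so that only an upper bound on the eigenvalues, not on their absolute values, is needed. The minor points you flag (loops at each vertex since $0\in C_k$, and the ordered-versus-unordered edge count) are handled correctly and do not affect the bound.
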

We are ready to prove Theorem \ref{new-incidence}. 
\begin{proof}[Proof of Theorem \ref{new-incidence}]
Set $k=d+1$. We identify each point $p=(p_1, \ldots, p_d)$ in $P$ with $(0, p)\in \mathbb{F}_q^k$ and each sphere $s$ centered at $a\in \mathbb{F}_q^d$ of square radius $r^2$ with $(r, a)\in \mathbb{F}_q^d$. It is clear that there is an incidence between the point $p$ and the sphere $s$ if $(p_1-a_1)^2+\cdots+(p_d-a_d)^2=(r-0)^2$. This means that $(0,p)-(r, a)\in C_k$, i.e. an edge between $(0, p)$ and $(r, a)$ in $G_{Q, k}$. Let $P'$ and $S'$ be the sets of corresponding points in $\mathbb{F}_q^k$. We have $I(P, S)=e(P', S')$. Set $W=P'\cup S'$. It is clear that $e(P', S')\le e(W, W)$. Thus, the theorem follows from Lemma \ref{expander} and theorem's assumptions. 
\end{proof}
\begin{proof}[Proof of Theorem \ref{new-incidence11}]
Set $k=d+1$. The argument is the same as what we did for Theorem \ref{new-incidence}, except that we use the graph $G_{||\cdot||, k}$ in place of $G_{Q, k}$. 
\end{proof}
\section{Proof of Theorem \ref{sum-product}} \label{sec6}
To prove Theorem \ref{sum-product}, we need to deal with two cases $d\equiv 3\mod 4$ and $d\equiv 1\mod 4$. However, the proofs for these two situations are almost identical, so we only present an argument for $d\equiv 3\mod 4$. In particular, we will show that 
\begin{theorem}\label{sum-product1111}
Let $A$ be a set in $\mathbb{F}_q$ with $q\equiv 3\mod 4$ and $|A|\gg q^{1/2}$. For $d\equiv 3\mod 4$, we  have at least one of two following statements:
\begin{enumerate}
\item $|A+A|\ge \min \left\lbrace q^{\frac{d+1}{2d}}, |A|^{\frac{d+1}{d}}\right\rbrace$.
\item $|dA^2|\gg \frac{|A|^d}{q^{\frac{d-1}{2}}}$. 
\end{enumerate}
\end{theorem}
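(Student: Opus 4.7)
The plan is to recast the problem as a point-sphere incidence and apply Theorem \ref{new-incidence} (or, more sharply, the bipartite form $I(P,S)\ll|P||S|/q+q^{(d-1)/2}\sqrt{|P||S|}$, which follows from the same expander-mixing argument underlying its proof---the only positive non-trivial eigenvalue of $G_{Q,k}$ for $k=d+1$ is $q^{(d-1)/2}$). Take $P=(A+A)^d\subset\mathbb{F}_q^d$, so $|P|\le|A+A|^d$, and let $S$ consist of the spheres with centers in $A^d$ and radii in the set of nonzero squares that lie in $dA^2$, so that $|S|\le|A|^d|dA^2|$ and every sphere in $S$ satisfies the square-radius hypothesis of Theorem \ref{new-incidence}.

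For each pair $(a,b)\in A^d\times A^d$ with $\|b\|^2=b_1^2+\cdots+b_d^2$ a nonzero square, the point $a+b\in P$ is incident to the sphere of center $a$ and radius $\|b\|^2$; the map $(a,b)\mapsto(a,a+b)$ is injective, so these are distinct incidences. Writing $A^d_\square=\{b\in A^d:\|b\|^2\text{ is a nonzero square}\}$ and granting $|A^d_\square|\gg|A|^d$ (see below), we obtain $I(P,S)\gg|A|^{2d}$, and the incidence bound yields
\[
|A|^{2d}\ll\frac{|A+A|^d|A|^d|dA^2|}{q}+q^{\frac{d-1}{2}}\sqrt{|A+A|^d|A|^d|dA^2|}.
\]
Assume alternative (2) fails, i.e.\ $|dA^2|\ll|A|^d/q^{(d-1)/2}$. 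If the first term dominates, then $|A+A|^d\gg q^{(d+1)/2}$, so $|A+A|\gg q^{(d+1)/(2d)}$. If the second term dominates, then $|A+A|^d\gg|A|^{2d}/q^{(d-1)/2}$, and combined with $|A|\gg q^{1/2}$ this gives $|A+A|\gg|A|^{(d+1)/d}$. Either way, alternative (1) holds.

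The step I expect to be the main obstacle is the lower bound $|A^d_\square|\gg|A|^d$. Using $\mathbf 1[\eta(\|b\|^2)=1]=\tfrac12(1+\eta(\|b\|^2))$ reduces the problem to estimating the character sum $\sum_{b\in A^d}\eta(\|b\|^2)$ together with the isotropic count $|\{b\in A^d:\|b\|^2=0\}|$. Expressing $\eta$ through a Gauss sum and factoring the resulting exponential sum coordinate-by-coordinate gives only an error of order $q^{(d+1)/2}|A|^{d/2}$, which beats $|A|^d$ only when $|A|\gg q^{(d+1)/d}$---far stronger than the stated hypothesis. Closing this gap in the regime $|A|\gg q^{1/2}$ will likely require either a second-moment or dyadic analysis of the representation function $n(r)=|\{b\in A^d:\|b\|^2=r\}|$, possibly combined with the fixed-radius bound of Observation~3, or a symmetrization argument that runs the above construction simultaneously on $A$ and on a non-square dilate $\alpha A$ so that the square-radius incidences always account for at least a constant fraction of $|A|^{2d}$.
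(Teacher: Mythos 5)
Your overall strategy coincides with the paper's: lower-bound the incidences between $P=(A+A)^d$ and the spheres centred at points of $A^d$ with square radii in $dA^2$ by $\gg|A|^{2d}$, then play this off against an upper incidence bound. There are, however, two genuine gaps. The first is the ``bipartite form'' $I(P,S)\ll |P||S|/q+q^{(d-1)/2}\sqrt{|P||S|}$, which you assert follows from the same expander-mixing argument. It does not, and it is in fact false: a fixed point of $\mathbb{F}_q^d$ lies on roughly $q^d/2$ spheres of square radius, so taking $|P|=1$ and $S$ that family gives $I(P,S)\sim q^d$, while your bound would give $O(q^{d-1/2})$. The reason is that $G_{Q,k}$ ($k=d+1$) has nontrivial \emph{negative} eigenvalues of absolute value $\sim q^{(d+1)/2}$ (Theorem \ref{eigenvalue}); only the positive nontrivial eigenvalue is $q^{(d-1)/2}$. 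The negative eigenvalues can be discarded in the quadratic form that yields $e(W,W)\le |W|^2/q+q^{(d-1)/2}|W|$ for a \emph{single} vertex set (Lemma \ref{expander}), but not in the bilinear form needed for two sets of very different sizes. Since here $|S|=|A|^d|dA^2|$ can greatly exceed $|P|=|A+A|^d$, applying the symmetric Theorem \ref{new-incidence} with $N=\max(|P|,|S|)$ is too lossy (the term $N^2/q$ destroys the count). The paper's remedy is a balancing device you are missing: partition the radius set into $m=|dA^2||A|^d/|A+A|^d$ blocks of size $|A+A|^d/|A|^d$, so that each resulting sphere family $S_i$ satisfies $|S_i|\le|A+A|^d=|P|$; apply Theorem \ref{new-incidence} to each pair $(P,S_i)$, use $|A+A|\le q^{(d+1)/(2d)}$ to make the term $q^{(d-1)/2}|A+A|^d$ dominate, and sum over $i$ to get $M\ll q^{(d-1)/2}|dA^2||A|^d$.

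The second gap is the one you flag yourself: proving that $\gg|A|^d$ tuples in $A^d$ have square norm assuming only $|A|\gg q^{1/2}$. Your instinct that a second-moment analysis of the representation function is required is correct, and this is exactly what the paper's Lemma \ref{square} does: it counts solutions of $xy=a+b$ with $x,y$ nonzero squares, $a\in A^2$, $b\in(d-1)A^2$, via a Cauchy--Schwarz/energy estimate (Lemma 2.1 of \cite{VA}) together with the trivial bound $E^+((d-1)A^2,(d-1)A^2)\ll|A|^{2d-3}$; the hypothesis $|A|\gg q^{1/2}$ is precisely what makes the error term acceptable, and since each square value of $a+b$ contributes $(q-1)/2$ solutions, the solution count converts back into $\gg|A|^d$ square-sum-type tuples. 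Modulo these two points, your concluding dichotomy (first term dominant forces $|A+A|\gg q^{(d+1)/(2d)}$; second term dominant together with $|A|\gg q^{1/2}$ forces $|A+A|\gg|A|^{(d+1)/d}$) matches the paper's, though the paper arranges the hypotheses at the outset so that only the second term survives.
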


For a triple $(x, y, z)\in A\times A\times A$, we say that $(x, y, z)$ is a \textit{square triple} if $x^2+y^2+z^2$ is a square, otherwise, we say it is a \textit{non-square triple}. 

For $A\subset \mathbb{F}_q$, define $A^2:=\{x^2\colon x\in A\}$. A tuple $(x_1, \ldots, x_d)\in A^d$ is called \textit{square-sum-type} if $x_1^2+\cdots+x_d^2$ is a square in $\mathbb{F}_q$. The next lemma shows that most $d$-tuples in $A^d$ are of square-sum-type.
\begin{lemma}\label{square}
Any set $A\subset \mathbb{F}_q$ with $|A|\gg q^{1/2}$ has at least $\gg |A|^d$ square-sum-type tuples. 
\end{lemma}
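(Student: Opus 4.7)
My plan is to estimate the balanced character sum
\[\Sigma:=\sum_{(x_1,\ldots,x_d)\in A^d}\eta(x_1^2+\cdots+x_d^2)\]
and then deduce the lemma by a short counting argument. Writing $N_+$, $N_0$, $N_-$ for the number of tuples whose coordinate square-sum is a nonzero square, is zero, or is a nonsquare, respectively, the conventions $\eta(0)=0$ and $N_++N_0+N_-=|A|^d$ give
\[N_\square = N_++N_0 = \tfrac12\bigl(|A|^d+N_0+\Sigma\bigr)\ge \tfrac12\bigl(|A|^d-|\Sigma|\bigr).\]
Hence it suffices to prove $|\Sigma|\ll q^{1/2}|A|^{d-1}$, because then the hypothesis $|A|\gg q^{1/2}$ (with a sufficiently large implied constant) forces $|\Sigma|\le |A|^d/2$ and therefore $N_\square\gg |A|^d$.

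To bound $\Sigma$ I would linearize $\eta$ via the Gauss-sum identity $\eta(s)=\mathcal{G}_1^{-1}\sum_{t\ne 0}\eta(t)\chi(ts)$, valid for every $s\in\mathbb{F}_q$ once we agree that $\eta(0)=0$ (the $s=0$ case follows from the orthogonality of $\eta$ on $\mathbb{F}_q^*$). Swapping the order of summation factors the inner sum across coordinates, so with $T(t):=\sum_{x\in A}\chi(tx^2)$ one finds
\[\Sigma = \mathcal{G}_1^{-1}\sum_{t\ne 0}\eta(t)\,T(t)^d.\]
Pulling out $|\mathcal{G}_1|=q^{1/2}$, using $|\eta(t)|\le 1$, and using the trivial pointwise bound $|T(t)|\le |A|$ on $d-2$ of the factors leaves
\[|\Sigma|\le q^{-1/2}|A|^{d-2}\sum_{t\ne 0}|T(t)|^2.\]

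The only non-routine step is the $L^2$ estimate for $T$. Expanding the square and applying orthogonality of $\chi$ gives
\[\sum_{t\in\mathbb{F}_q}|T(t)|^2 = q\cdot\#\{(x,y)\in A^2 : x^2=y^2\}\le 2q|A|,\]
because the only $y\in A$ with $y^2=x^2$ are $y=\pm x$. Substituting into the previous display produces $|\Sigma|\le 2q^{1/2}|A|^{d-1}$, and combining with the first paragraph closes the argument. I don't foresee a substantial obstacle: all the ingredients are elementary Gauss-sum manipulations together with this crude character-sum $L^2$ estimate, which is precisely the point where the hypothesis $|A|\gg q^{1/2}$ is consumed.
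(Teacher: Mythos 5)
Your proof is correct, but it takes a genuinely different route from the paper's. The paper reduces the count to the number of solutions of $xy=a+b$ with $x,y$ ranging over the nonzero squares, $a\in A^2$, and $b$ in the multiset $\{x_1^2+\cdots+x_{d-1}^2\colon x_i\in A\}$, and then invokes a bilinear estimate (Lemma 2.1 of \cite{VA}) whose error term is controlled by the additive energy $E^+((d-1)A^2,(d-1)A^2)\ll |A|^{2d-3}$; the hypothesis $|A|\gg q^{1/2}$ enters to make that energy smaller than $|A|^{2d-1}/q$. You instead linearize $\eta$ by a Gauss sum and apply orthogonality to the Weyl sum $T(t)=\sum_{x\in A}\chi(tx^2)$, obtaining $|\Sigma|\le 2q^{1/2}|A|^{d-1}$, and the same hypothesis enters to beat the main term $|A|^d$; all steps check out (the identity $\eta(s)=\mathcal{G}_1^{-1}\sum_{t\ne 0}\eta(t)\chi(ts)$ does hold at $s=0$ with the convention $\eta(0)=0$, and the H\"older step uses $d\ge 2$). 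Your argument is self-contained and makes explicit the square-root cancellation that the paper obtains from a black-box lemma; the paper's bilinear formulation would, on the other hand, let one exploit any nontrivial energy bound for $(d-1)A^2$ rather than only the trivial one. One small caveat: you count tuples whose square-sum is a square \emph{including zero} (your $N_++N_0$), which matches the literal wording of the lemma, but the application in Theorem \ref{sum-product1111} needs the square-sum to lie in $SQ(\mathbb{F}_q)$, i.e.\ to be a nonzero square, which is what the paper's proof actually produces. This is repaired in one line with the tools you already have: by orthogonality and your $L^2$ bound, $N_0\le q^{-1}|A|^d+2|A|^{d-1}=o(|A|^d)$, so $N_+\gg |A|^d$ as well.
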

\begin{proof}
Let $SQ(\mathbb{F}_q)$ be the set of non-zero square elements in $\mathbb{F}_q$, and $B$ be the multi-set defined by $B:=\{x_1^2+x_2^2+\cdots+x_{d-1}^2\colon x_i\in A\}$. We write $\overline{B}$ for the set of distinct elements in $B$ and $\sum_{b\in \overline{B}}m(b)^2$, where $m(b)$ is the multiplicity of $b$, is the number of tuples \[(x_1, \ldots, x_{d-1}, y_1, \ldots, y_{d-1})\in A^{2(d-1)}\] such that $x_1^2+\cdots+x_{d-1}^2=y_1^2+\cdots+y_{d-1}^2$. We denote the number of these tuples by $E^+((d-1)A^2, (d-1)A^2)$. One can check that $E^+((d-1)A^2, (d-1)A^2)\ll |A|^{2d-3}$.  
We now consider the following equation 
\begin{equation}\label{eq0918}xy=a+b,\end{equation}
where $x, y\in SQ(\mathbb{F}_q), a\in A^2, b\in B$. Let $M$ be the number of solutions of this equation.  Since $|SQ(\mathbb{F}_q)|=\frac{q-1}{2}$, it follows from Lemma $2.1$ in \cite{VA} that 
\[\left\vert M-\frac{|A^2||B|(q-1)^2}{4q}\right\vert\le q^{1/2}\left(\frac{(q-1)E^+((d-1)A^2, (d-1)A^2)}{2}\right)^{1/2}\left(\frac{(q-1)|A^2|}{2}\right)^{1/2}.\]

Thus, $M\gg \frac{|A|^d(q-1)^2}{q}$ if $E^+((d-1)A^2, (d-1)A^2)\ll \frac{|A|^{2d-1}}{q}$, which can be satisfied under the condition $|A|\gg q^{1/2}$, since $E^+((d-1)A^2, (d-1)A^2)\ll |A|^{2d-3}$. 

Observe that for $a\in A^2$ and $b\in B$, if $a+b$ is a square, then it contributes $(q-1)/2$ solutions to the equation (\ref{eq0918}). Hence, the number of square-sum-type tuples $(a_1, \ldots, a_d)\in A^d$ is at least $\frac{2M}{q-1}$. This completes the proof of the lemma. 
\end{proof}

We are now ready to prove Theorem \ref{sum-product1111}
\begin{proof}[Proof of Theorem \ref{sum-product1111}]
If $|A+A|\ge |A|^{(d+1)/d}$ or $|A+A|\ge q^{\frac{d+1}{2d}}$, then we are done. Without loss of generality, we assume that $|A+A|< |A|^{\frac{d+1}{d}}$ and $|A+A|< q^{\frac{d+1}{2d}}$.

We consider the following equation 
\begin{equation}\label{eq1} (x_1-y_1)^2+(x_2-y_2)^2+\cdots+(x_d-y_d)^2=t,\end{equation}
where $x_1, x_2, \ldots, x_d\in A+A, y_1, y_2, \ldots, y_d\in A, t\in dA^2\cap SQ(\mathbb{F}_q) =(A^2+\cdots+A^2)\cap SQ(\mathbb{F}_q)$. 

Let $M$ be the number of solutions of this equation. 

By Lemma \ref{square}, the number of square-sum-type tuples in $A^d$ is at least $\gg |A|^d$. For each of those tuples, denoted by $(a_1, \ldots, a_d)$, it will contribute $\gg |A|^d$ solutions to the number of solutions of the equation (\ref{eq1}). Indeed, tuples with $(x_1,\ldots, x_d, y_1, \ldots, y_d)=(y_1+a_1, \ldots, y_d+a_d, y_1, \ldots, y_d)$ with $y_i\in A$ satisfy the equation (\ref{eq1}).  Therefore, $M\gg |A|^{2d}$. 

Define $P:=(A+A)\times (A+A)\times \cdots\times (A+A)\subset \mathbb{F}_q^d$ and $S$ be the set of spheres centered at points in $A^d$ of square radii in $dA^2$. We have $|P|=|A+A|^d$ and $|S|=|A|^d|dA^2|$. 

To apply Theorem \ref{new-incidence} effectively, one has to have the condition $|P|\sim |S|$. To this end, we partition the radius set into $m$ subsets of size $\frac{|A+A|^d}{|A|^d}$, where $m=\frac{|dA^2||A|^d}{|A+A|^d}>1$ since otherwise $|A+A|\ge |A|^{(d+1)/d}$. We denote those radius sets by $R_1, \ldots, R_m$. For $1\le i\le m$, let $S_i$ be the set of spheres centered at points in $A^d$ of square radii in $R_i$. Notice that, for each $i$, $S_i$ can be an empty set if there is no square element in $R_i$, but what we only need is an upper bound of $S_i$ which is $|A+A|^d$. 

One can check that $M$ is bounded by $\sum_{i=1}^mI(P, S_i)$. For each $i$, applying Theorem \ref{new-incidence} gives us
\[I(P, S_i)\le \frac{|A+A|^{2d}}{q}+q^{\frac{d-1}{2}}|A+A|^d\ll q^{\frac{d-1}{2}}|A+A|^d,\]
since $|A+A|\le q^{\frac{d+1}{2d}}$. 
Taking the sum over all $i$, we achieve
\[M=\sum_{i=1}^mI(P, S_i)\le q^{\frac{d-1}{2}}|A+A|^d\cdot \frac{|dA^2||A|^d}{|A+A|^d}=q^{\frac{d-1}{2}}|dA^2||A|^d.\]
Using the fact that $M\gg |A|^{2d}$ leads to 
\[|dA^2|\gg \frac{|A|^d}{q^{\frac{d-1}{2}}}.\] 
This completes the proof of the theorem.
\end{proof}
\section{Open questions}
Theorems \ref{new-incidence} and \ref{new-incidence11} give some answers for Question \ref{qs}, but we do not know whether or not Theorem \ref{motbig} can be improved for the following cases:
\begin{enumerate}
\item (square radii): $d\equiv 3\mod 4$ and $q\equiv 1\mod 4$. 
\item (square radii): $d\equiv 1\mod 4$.
\item (square radii): $d\equiv 2\mod 4$ and $q\equiv 1\mod 4$. 

\vspace{0.5cm}
\item (non-square radii): $d\equiv 1\mod 4$ and $q\equiv 1\mod 4$. 
\item (non-square radii): $d\equiv 3\mod 4$.
\item (non-square radii): $d$ is even.
\end{enumerate}
We hope to address these cases in a subsequent paper.

\section*{Acknowledgements}
Doowon Koh was supported by the National Research Foundation of Korea (NRF) grant funded by the Korea government (MIST) (No. NRF-2018R1D1A1B07044469). T. Pham was supported by Swiss National Science Foundation grant P400P2-183916.

\end{document}